\newtheorem{theorem}{Theorem}[section]
\newtheorem*{thm}{Theorem}
\newtheorem{lemma}[theorem]{Lemma}
\newtheorem{corollary}[theorem]{Corollary}
\newtheorem{proposition}[theorem]{Proposition}
\newtheorem*{conjecture}{Conjecture} 
\theoremstyle{remark}
\newtheorem{remark}[theorem]{Remark}
\theoremstyle{definition}
\newtheorem{definition}[theorem]{Definition}
\DeclareMathOperator{\Sing}{Sing}
\DeclareMathOperator{\Aut}{Aut}
\DeclareMathOperator{\kod}{kod}
\begin{document}

\title[Bloch's conjecture for GBT surfaces]{Bloch's conjecture for Generalized Burniat Type surfaces with $p_g=0$}
\author[I. Bauer, D. Frapporti]{Ingrid Bauer, Davide Frapporti }

\address {Mathematisches Institut der Universit\"at Bayreuth\\
NW II,  Universit\"atsstr. 30\\
95447 Bayreuth}

\thanks{The present work took place in the realm of the DFG
Forschergruppe 790 ``Classification of algebraic
surfaces and compact complex manifolds''.}

\date{\today}
\subjclass[2000]{14C25, 14J29, 14J50} 

\makeatletter
\renewcommand\theequation{\thesection.\arabic{equation}}
\@addtoreset{equation}{section}
\makeatother

\begin{abstract}
The aim of this article is to prove Bloch's conjecture, asserting that the group of rational equivalence classes of zero cycles of degree 0 is trivial for surfaces with geometric genus zero, for regular generalized Burniat type surfaces. The technique is the method of ``enough automorphisms" introduced by Inose-Mizukami in a simplified version due to the first author.

\end{abstract}
\maketitle

\section*{Introduction}

Let $S$ be a smooth projective surface and let
$$A_0(S)=\bigoplus_{i=- \infty}^\infty A^i_0(S)$$
be the group of rational equivalence classes of zero cycles on $S$. Then
\textit{Bloch's conjecture} asserts the following:

\begin{conjecture}[{\cite{Bloch}}]
Let $S$ be a smooth surface with $p_g(S)=0$. Then the kernel $T(S)$
of the natural morphism:
$$A_0^0(S)\longrightarrow \mathrm{Alb}(S)$$
is trivial.
\end{conjecture}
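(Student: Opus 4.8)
The plan is to analyze $T(S)$ through its functoriality under correspondences and automorphisms, and to organize the argument according to the Kodaira dimension of $S$. First I would dispose of the surfaces that are not of general type: by the theorem of Bloch, Kas and Lieberman, the conjecture holds for every surface with $p_g(S)=0$ and Kodaira dimension at most one. This reduces the problem to the case in which $S$ is of general type, where $p_g(S)=0$ forces $q(S)=0$ as well, so that $\mathrm{Alb}(S)=0$ and the statement becomes the vanishing of the whole group $A_0^0(S)=T(S)$.

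For the general type case my strategy rests on the equivalence, going back to Mumford, Roitman and Bloch--Srinivas, between the vanishing of $T(S)$ and a decomposition of the diagonal. Concretely, $T(S)=0$ with rational coefficients is equivalent to a rational equivalence
$$\Delta_S \sim S\times\{p\} + Z \quad \text{in } CH^2(S\times S)_{\mathbb{Q}},$$
with $Z$ supported on $D\times S$ for a divisor $D\subset S$. The hypothesis $p_g(S)=0$ is exactly what makes this plausible on the cohomological side: since $H^0(S,\Omega_S^2)=0$, the Hodge structure on $H^2(S)$ is purely of type $(1,1)$ and hence, by the Lefschetz $(1,1)$-theorem, entirely algebraic, so the transcendental part of the cohomology of $S$ vanishes. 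The content of the conjecture is precisely that this cohomological vanishing can be lifted to an equivalence of algebraic cycles.

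To effect the lift I would use the method of \emph{enough automorphisms} announced in the abstract. The idea is to exploit a sufficiently large group of automorphisms, or more generally self-correspondences, of $S$, comparing the induced endomorphisms of $T(S)$ with the induced action on the holomorphic forms $H^0(S,\Omega_S^2)$ and $H^0(S,\Omega_S^1)$. A correspondence that acts as the identity on these regular forms also acts as the identity on $T(S)\otimes\mathbb{Q}$, while a correspondence built to contract points (for instance arising when $S$ is realized as the quotient of a product of curves by a finite group) annihilates the Albanese kernel. Balancing these two actions by means of enough symmetries produces the required decomposition of the diagonal and forces $T(S)=0$; this is the mechanism that succeeds for quotient surfaces with rich automorphism groups such as the generalized Burniat type surfaces.

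The main obstacle is exactly the passage from cohomology to cycles in the absence of such symmetry. For a general surface of general type with $p_g(S)=0$ there is no known mechanism upgrading the vanishing of the transcendental cohomology to the vanishing of the Albanese kernel, which is why the conjecture remains open in full generality. The \emph{enough automorphisms} technique circumvents this only when $S$ admits a large automorphism group or a presentation as a group quotient of a simpler variety, so that \textbf{any} complete proof of the statement as worded would require either establishing comparable symmetry for every surface of general type with $p_g=0$, which cannot be expected in general, or a genuinely new input beyond Hodge theory and the theory of algebraic cycles.
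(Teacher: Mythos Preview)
The statement you are asked to prove is Bloch's \emph{conjecture} in its full generality, and neither the paper nor your proposal actually proves it. In the paper the statement is explicitly labelled as a \texttt{conjecture} environment and no proof is given; the paper's contribution is to verify the conjecture for a specific family of surfaces (regular generalized Burniat type surfaces), not to establish it in general.

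Your proposal is, in effect, an honest survey of the state of the art rather than a proof: you reduce to the general type case via Bloch--Kas--Lieberman, recall the equivalence with a decomposition of the diagonal, outline the ``enough automorphisms'' mechanism, and then correctly concede in your final paragraph that no known technique upgrades the cohomological vanishing $H^{2,0}(S)=0$ to the cycle-theoretic vanishing $T(S)=0$ for an arbitrary surface of general type with $p_g=0$. That concession is accurate, but it means your write-up contains no argument for the statement as worded; the ``main obstacle'' you name is precisely the content of the conjecture, and you offer no way past it. If the intent was to match what the paper does, you should instead restrict to the GBT surfaces and carry out the concrete verification: exhibit a $(\mathbb{Z}/2\mathbb{Z})^2$ inside $\Aut(S)$, check via the explicit fixed-locus computations that each of the three nontrivial quotients $S/\sigma$ has Kodaira dimension at most $1$, and then invoke Corollary~\ref{corBl}. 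That is the actual proof in the paper, and it is entirely disjoint from the Bloch--Srinivas diagonal argument you sketch.
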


The conjecture has been proven for surfaces $S$ with Kodaira dimension $\kod(S)\leq 1$ by 
Bloch, Kas and Liebermann (cf. \cite{bkl}) and has been verified for several examples,  
see e.g. \cite{barlow85b}, \cite{BauerInoue}, \cite{chan2013}, \cite{IM79}, \cite{Keum88}, \cite{Voisin92}.
Thanks to a result of S. Kimura (cf. \cite{kimura05}), all product quotient surfaces (i.e.
minimal models of $(C_1\times C_2)/G$, where $G$ is a finite group acting on the product
of two curves of genus at least 2) with $p_g=0$ satisfy Bloch's conjecture (cf. \cite{BCGP12}).

\textit{Burniat surfaces}   are surfaces of general type with invariants $p_g=0$ and $K^2 = 6,5,4,3,2$
whose  birational models  were constructed by P. Burniat in 1966 (cf. \cite{burniat}) 
 as singular bidouble covers of the projective plane.
In 1994,  M. Inoue (cf. \cite{inoue}) reconstructed them as quotients of a divisor in a product of three elliptic curves by 
a finite group acting freely (see also \cite{BC12} and \cite{BC13}).

Following and generalizing Inoue's approach,
 in the recent paper \cite{BCF14}, we construct and classify  
  a new class of surfaces of general type \textit{``generalized Burniat type surfaces"}. 
  These surfaces   have invariants $K^2=6$ and $0\leq p_g=q\leq 3$ and have been constructed
as quotient of a divisor of multi-degree $(2,2,2)$ in a product of three elliptic curves by 
a free $(\mathbb Z/2\mathbb Z)^3$-action (see Section \ref{GBTS}). 
Generalized Burniat type surfaces form 16 irreducible families; four families have $p_g=0$
 and  form four connected components $\mathfrak{N}_i$
of the Gieseker moduli space $\mathfrak{M}^{can}_{1,6}$. Each component is irreducible, generically
smooth, normal and unirational 
of dimension 4 in two cases and 3 in the others.

The main result of this note is to show that Bloch's conjecture holds for generalized Burniat type 
surfaces with $p_g=0$. The proof uses the method of ``enough automorphisms''
introduced by Inose and Mizukami (cf. \cite{IM79}) and refined by Barlow (cf. \cite{barlow85b}),
but in a simpler way (cf. \cite{BauerInoue}).

\begin{thm}
Let $S$ be a generalized Burniat type surface with $p_g(S)=0$. Then
 $S$ verifies  Bloch's conjecture: $T(S)=A_0^0(S)=0$.
\end{thm}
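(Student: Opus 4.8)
The plan is to run the Inose--Mizukami method of ``enough automorphisms'', in the streamlined form of \cite{BauerInoue}. Since $S$ is a generalized Burniat type surface with $p_g(S)=0$ and these surfaces satisfy $p_g=q$, we have $q(S)=0$, hence $\Alb(S)=0$ and $T(S)=A_0^0(S)$; by Roitman's theorem the torsion subgroup of $A_0^0(S)$ injects into $\Alb(S)=0$, so $A_0^0(S)$ is torsion free, and it suffices to prove that $A_0^0(S)\otimes\mathbb Q=0$.

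The cycle-theoretic part is purely formal. Let $\sigma$ be an involution of $S$ and $\pi\colon S\to S/\langle\sigma\rangle$ the quotient (a normal surface with at worst rational double points). If $A_0^0(S/\langle\sigma\rangle)\otimes\mathbb Q=0$, then from the identities $\pi_*\pi^*=2$ and $\pi^*\pi_*=\mathrm{id}+\sigma_*$ on $\mathrm{CH}_0$ one gets $\sigma_*=-\mathrm{id}$ on $A_0^0(S)\otimes\mathbb Q$. Hence, if we can exhibit a subgroup $\langle\sigma,\tau\rangle\cong(\mathbb Z/2\mathbb Z)^2\le\Aut(S)$ for which all three non-trivial involutions $\sigma,\tau,\sigma\tau$ have this property, then on $A_0^0(S)\otimes\mathbb Q$ we have simultaneously $(\sigma\tau)_*=\sigma_*\circ\tau_*=\mathrm{id}$ and $(\sigma\tau)_*=-\mathrm{id}$, forcing $A_0^0(S)\otimes\mathbb Q=0$ and, with the previous step, $T(S)=A_0^0(S)=0$. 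To verify the hypothesis for a given involution $\sigma$ it suffices to show that the minimal resolution $Y$ of $S/\langle\sigma\rangle$ has $\kod(Y)\le 1$: then $Y$ has $p_g=q=0$ (both are dominated by the corresponding invariants of $S$), so Bloch's conjecture holds for $Y$ by \cite{bkl}, and since $\Alb(Y)=0$ Roitman's theorem upgrades this to $A_0^0(Y)=0$; as $Y\to S/\langle\sigma\rangle$ contracts only smooth rational curves, $A_0^0(S/\langle\sigma\rangle)=A_0^0(Y)=0$ as well.

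It remains to produce, for each of the four families $\mathfrak N_1,\dots,\mathfrak N_4$, such a pair of commuting involutions. Write $S=X/G$ with $X\subset A:=E_1\times E_2\times E_3$ the $(2,2,2)$-divisor and $G\cong(\mathbb Z/2\mathbb Z)^3$ the free group of $2$-torsion translations. The relevant automorphisms of $S$ come from the subgroup of $\Aut(A)$ stabilising $X$ and normalising $G$; it contains the involutions induced by the sign maps $z_i\mapsto -z_i$ on the elliptic factors (composed, in the families that require it, with a suitable $2$-torsion translation so as to stabilise $X$), and these centralise $G$, hence descend to commuting involutions of $S$. The plan is then to pick two of them, $\sigma$ and $\tau$, to identify the three quotients $S/\langle\sigma\rangle=X/\langle G,\tilde\sigma\rangle$, $S/\langle\tau\rangle$ and $S/\langle\sigma\tau\rangle$ as explicit quotients of $X$, and to compute their Kodaira dimensions: passing to $E_i/\{\pm1\}\cong\mathbb P^1$ trivialises the canonical bundle in the $i$-th direction, so one expects these quotients to be rational or (properly) elliptic surfaces, hence to have $\kod\le 1$.

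The main obstacle is precisely this last step. One must check, family by family --- the combinatorial data (which $2$-torsion points enter $G$, and therefore the precise curves of fixed points of $\tilde\sigma$, $\tilde\tau$ and $\widetilde{\sigma\tau}$ on $X$) differs among the $\mathfrak N_i$ --- that a pair $\sigma,\tau$ can be chosen for which all three quotients have $\kod\le 1$, keeping careful track of those fixed curves, since they are the branch loci that govern the Kodaira dimension of the quotient. Should a first choice leave a quotient still of general type, one either switches to a different involution in $\Aut(S)$ or enlarges the acting group: the same bookkeeping shows that for any $(\mathbb Z/2\mathbb Z)^k\le\Aut(S)$ it is enough that $S$ modulo each of its index-$2$ subgroups have vanishing $A_0^0\otimes\mathbb Q$. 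Once the list of the relevant quotients and their Kodaira dimensions is established, the conclusion of the theorem is immediate from the first two paragraphs.
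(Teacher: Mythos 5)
Your formal reduction is exactly the paper's: the identities $\pi_*\pi^*=2$, $\pi^*\pi_*=\mathrm{id}+\sigma_*$ give $\sigma_*=-\mathrm{id}$ on $A_0^0\otimes\mathbb Q$ whenever the quotient has trivial $A_0^0$, and a $(\mathbb Z/2\mathbb Z)^2$ of such involutions forces $\mathrm{id}=-\mathrm{id}$; this is precisely Proposition 1.3 and Corollary 1.5 of \cite{BauerInoue}, which the paper imports wholesale. The gap is that everything after ``It remains to produce\dots'' is a plan, not a proof, and it is exactly there that the paper's actual mathematical content lies. You acknowledge this yourself (``The main obstacle is precisely this last step'') and then defer it with ``should a first choice leave a quotient still of general type, one either switches to a different involution or enlarges the acting group.'' Nothing in your write-up guarantees that a working pair $(\sigma,\tau)$ exists in any of the four families, and the existence of such a pair is the theorem.

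Concretely, the paper settles this step not by your suggested route (identifying $\hat X/\langle G,\tilde\sigma\rangle$ as an explicit quotient and ``expecting'' it to be rational or elliptic --- the fibration $E_i/\{\pm1\}\cong\mathbb P^1$ argument is only a heuristic and is never made to work), but by a numerical criterion: Proposition \ref{NotGT} uses Horikawa's double-cover formulas to show that if $\mathrm{Fix}(\sigma')$ on $S$ has one of three specific shapes (e.g.\ $>8$ isolated points plus a non-rational curve, or $6$ points plus an elliptic curve disjoint from the possible $(-2)$-curve), then $S/\sigma'$ cannot be of general type. Feeding this criterion requires (a) the classification of the singular locus of $\hat X$ (smooth or exactly $8$ nodes, Proposition \ref{NumberOfSing}), (b) the complete fixed-locus tables for all relevant elements of $\mathcal G_0$ on $\hat X$, separately in the smooth and nodal cases (Lemmas \ref{FixLoc1}--\ref{FixLoc3}), and (c) a case-by-case count, for each of $G_1,\dots,G_4$ and each of the three cosets $\sigma_i G_j$, of how $G_j$ acts on those fixed loci to determine $\mathrm{Fix}(\overline{\sigma_i})$ downstairs. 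None of this appears in your proposal, and it is not routine: for instance, when $\hat X$ is nodal the involution $g_3$ fixes a configuration of eight elliptic curves meeting in the nodes, and one must track how this descends through $G$ and lifts to the resolution. So the proposal is a correct restatement of the strategy but omits the substantive verification; as written it does not prove the theorem.
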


In \cite{BCF14} the authors show among others the following result:
\begin{thm}
Let $S$ be any surface whose moduli point lies in the connected component of the Gieseker moduli space of surfaces of general type of a regular generalized Burniat type surface, then $S$ is a generalized Burniat type surface.
\end{thm}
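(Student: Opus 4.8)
The plan is to show that the locus of generalized Burniat type (GBT) surfaces is closed in $\mathfrak M^{can}_{1,6}$; combined with the facts recorded in \cite{BCF14} --- that the four GBT families with $p_g=0$ are irreducible and have the same dimension as the four components $\mathfrak N_i$ containing them --- this forces each (irreducible) $\mathfrak N_i$ to coincide with the corresponding GBT family, which is the assertion. Equivalently: the GBT condition is open and closed, so by connectedness it holds throughout $\mathfrak N_i$. Openness is essentially the local study of moduli already carried out in \cite{BCF14} (the GBT surfaces sweep out a subset of the expected dimension, and the component is generically smooth); the heart of the matter is \emph{closedness}: any surface $S$ deformation equivalent to a \emph{regular} GBT surface $S_0$ is again of GBT type.

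First I would invoke the deformation invariance of the fundamental group. Lying in one connected component of the moduli space, $S$ and $S_0$ are deformation equivalent, hence diffeomorphic, so $\pi_1(S)\cong\pi_1(S_0)=:G$. For $S_0=\hat S_0/(\mathbb Z/2)^3$ the cover $\hat S_0$ is a smooth connected ample divisor of multidegree $(2,2,2)$ in an abelian threefold $E_1\times E_2\times E_3$, whence $\pi_1(\hat S_0)\cong\mathbb Z^6$ by the Lefschetz hyperplane theorem, and $G$ sits in an extension
\begin{equation*}
1\longrightarrow\mathbb Z^6\longrightarrow G\longrightarrow(\mathbb Z/2)^3\longrightarrow 1 .
\end{equation*}
The normal subgroup $\mathbb Z^6\trianglelefteq G$ is intrinsic, so it determines a connected \'etale $(\mathbb Z/2)^3$-cover $\hat S\to S$ with $\pi_1(\hat S)\cong\mathbb Z^6$, on which $(\mathbb Z/2)^3$ acts freely with $S=\hat S/(\mathbb Z/2)^3$.

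Next I would identify $\hat S$ through its Albanese map. Being \'etale of degree $8$ over a minimal surface of general type with $K_S^2=6$, $\chi(\mathcal O_S)=1$, the surface $\hat S$ is minimal of general type with $K_{\hat S}^2=48$, $\chi(\mathcal O_{\hat S})=8$, $q(\hat S)=3$ (from $\rk\pi_1(\hat S)^{\mathrm{ab}}=6$), hence $p_g(\hat S)=10$. Its Albanese map $\alpha\colon\hat S\to\hat A:=\Alb(\hat S)$ lands in an abelian threefold carrying the functorially induced $(\mathbb Z/2)^3$-action. One then needs: (i) $\alpha$ is generically finite onto its image (otherwise, by Castelnuovo--de Franchis, $\hat S$ would carry more than three independent holomorphic $1$-forms with pairwise vanishing products, contradicting the rank of $\wedge^2H^0(\Omega^1_{\hat S})\to H^0(\Omega^2_{\hat S})$ computed for $\hat S_0$); (ii) the image $Y=\alpha(\hat S)$ is an ample divisor, and, matching $K_{\hat S}^2=48$, $\chi(\mathcal O_{\hat S})=8$ with $Y^3$ and $\chi(\mathcal O_Y)$ via adjunction on $\hat A$, that $\alpha$ is birational onto $Y$ and $[Y]$ is of $(2,2,2)$-type; (iii) the $(\mathbb Z/2)^3$-action on $\hat A$ respects a splitting $\hat A\cong E_1'\times E_2'\times E_3'$ into elliptic curves, read off from the isotypical decomposition of $H^1(\hat S,\mathbb C)\cong\mathbb C^6$ under $(\mathbb Z/2)^3$, which must reproduce the configuration of $S_0$ fixed in \cite{BCF14}; (iv) the action is free and of the prescribed type. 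Together (i)--(iv) say exactly that $S$ is a generalized Burniat type surface.

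The main obstacle is (ii)--(iii): ruling out \emph{every} degeneration of the pair (abelian threefold, divisor) in the limit --- a non-split Albanese, a divisor in a different numerical class, a drop in $\deg\alpha$, or a non-free action producing quotient singularities (which must be excluded using that $S$ lies in the \emph{canonical} moduli space with the fixed invariants $K^2=6$, $\chi=1$). This is precisely the statement that the ``GBT structure'' --- the \'etale $(\mathbb Z/2)^3$-cover realized as a $(2,2,2)$-divisor in a product of three elliptic curves with a fixed action type --- is carried along the whole connected component. Once that is established, verifying that the resulting free quotient falls into one of the four families of \cite{BCF14} is routine bookkeeping, and, combined with openness (equivalently, with the dimension and irreducibility statements of \cite{BCF14}), the theorem follows.
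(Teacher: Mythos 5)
The paper does not actually prove this statement: it is quoted from \cite{BCF14}, and in the text it is deduced from the recalled Theorem \ref{BCF}(i) --- a surface homotopy equivalent to a GBT surface of type $i$ is itself a GBT surface of type $i$ --- together with the standard fact that two surfaces in the same connected component of the Gieseker moduli space are deformation equivalent, hence have diffeomorphic and in particular homotopy equivalent minimal models. Your proposal instead sets out to reprove the underlying result of \cite{BCF14} from scratch. The strategy you outline (deformation invariance of $\pi_1$, extraction of the characteristic normal subgroup $\mathbb Z^6$ to produce an \'etale $(\mathbb Z/2\mathbb Z)^3$-cover $\hat S\to S$, identification of $\hat S$ via its Albanese map as a divisor of type $(2,2,2)$ in a product of three elliptic curves with the prescribed free action) is indeed the Inoue-type-manifold method of Bauer--Catanese on which \cite{BCF14} rests, so you have correctly identified the right approach.

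However, what you have written is a roadmap, not a proof. Your steps (i)--(iv) are precisely where all the work lies, and you state them as goals rather than establish them; you yourself flag (ii)--(iii) as ``the main obstacle'' and then assert the conclusion ``once that is established.'' Concretely: generic finiteness and birationality of the Albanese map onto its image require an actual argument (the appeal to Castelnuovo--de Franchis and to ``matching invariants'' is not carried out); the numerics $K_{\hat S}^2=48$, $\chi(\mathcal O_{\hat S})=8$ only force $Y^3=48$, i.e.\ a polarization type $(d_1,d_2,d_3)$ with $d_1d_2d_3=8$, so the types $(1,1,8)$ and $(1,2,4)$ still have to be excluded; the splitting $\Alb(\hat S)\cong E_1'\times E_2'\times E_3'$ must be extracted from the character decomposition of $H^1(\hat S,\mathbb Z)$ under the $(\mathbb Z/2\mathbb Z)^3$-action, a genuine computation depending on the specific groups $G_j$ of Table \ref{q0}; and freeness of the action in the limit (equivalently, the absence of extra quotient singularities on the canonical model) must be proved, not just mentioned. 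Since none of these is done, there is a genuine gap; within the confines of this paper, the honest proof of the statement is the short deduction from Theorem \ref{BCF}(i), with the hard content delegated to \cite{BCF14}.
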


More precisely, in \cite{BCF14} it is proven that generalized Burniat type surfaces form exactly four irreducible connected components in their moduli space (cf. Theorem \ref{BCF}).
Therefore our result proves 
Bloch's conjecture for each surface in the  connected component of any regular generalized Burniat type surface.

\section{Bloch's conjecture for surfaces with a $(\mathbb Z/2 \mathbb Z)^2$-action}

The aim of this note is to prove Bloch's conjecture for generalized Burniat type 
surfaces with $p_g=q=0$. The proof uses the method of ``enough automorphisms''
introduced by Inose and Mizukami (cf. \cite{IM79}) and refined by Barlow (cf. \cite{barlow85b}).

\begin{definition}
Let $G$ be a finite group and $H\leq G$ be a subgroup. Then we set
$$z(H):=\sum_{h\in H} h \in \mathbb C G$$
\end{definition}

\begin{lemma}[\cite{barlow85b}]
Let $S$ be a nonsingular surface and $G$ a finite subgroup 
of $\Aut(S)$.
Let $H, H_1,\ldots, H_r$ be subgroups of $G$. We denote by $\mathcal I$ the two-sided ideal
of $\mathbb{C}G$ generated by $ z(H_1),\ldots, z(H_r)$. Assume that
\begin{itemize}[leftmargin=.9cm]
\item[i)] $z(H)\in \mathcal{I }$,
\item[ii)] $T(S/H_i)=0$, for every $i \in \{1,\ldots, r\}$.
\end{itemize}
Then $T(S/H)=0$.
\end{lemma}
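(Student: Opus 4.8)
The plan is to work in the group algebra $\mathbb{C}G$ and pass everything to the action on the Chow group of zero-cycles $A_0^0(S)$, or more precisely on $T(S)$. The key formal observation is that for a subgroup $K\leq G$ the quotient map $S\to S/K$ induces, after tensoring with $\mathbb{Q}$ (or $\mathbb{C}$), an isomorphism $T(S/K)\cong T(S)^K$, and more usefully the element $\tfrac{1}{|K|}z(K)\in\mathbb{C}G$ acts as the projector onto the $K$-invariants; thus $T(S/K)=0$ is equivalent to saying $z(K)$ annihilates $T(S)$ (as a $\mathbb{C}$-vector space $T(S)\otimes\mathbb{C}$, which is enough since $T(S)$ is a $\mathbb{Q}$-vector space, being divisible and torsion-free by Roitman). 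First I would record this correspondence carefully: correspondences act on $A_0^0$, a finite group $G\leq\Aut(S)$ makes $T(S)\otimes\mathbb{C}$ into a $\mathbb{C}G$-module, and $z(K)\cdot T(S)=0 \iff T(S/K)=0$.

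Granting that dictionary, the lemma is essentially pure algebra. By hypothesis (ii), $z(H_i)\cdot T(S)=0$ for each $i$. Since $T(S)$ is a $\mathbb{C}G$-module, the entire two-sided ideal $\mathcal{I}=\sum_i \mathbb{C}G\, z(H_i)\,\mathbb{C}G$ annihilates $T(S)$: any element of $\mathcal{I}$ is a finite sum of terms $a\, z(H_i)\, b$ with $a,b\in\mathbb{C}G$, and $a\,z(H_i)\,b$ acting on $T(S)$ first applies $b$, then $z(H_i)$ which kills everything, then $a$, giving $0$. By hypothesis (i), $z(H)\in\mathcal{I}$, so $z(H)$ also annihilates $T(S)$. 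By the dictionary again, $z(H)\cdot T(S)=0$ means exactly $T(S/H)=0$, which is the conclusion.

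The only genuine content, and the main obstacle, is establishing the dictionary — in particular the fact that $\tfrac{1}{|K|}z(K)$ acts as the averaging projector onto $T(S/H)\cong T(S)^K$ inside $T(S)$. This relies on two standard but nontrivial inputs: Mumford's / Bloch's machinery that a dominant generically finite map $f:S\to S'$ of degree $d$ gives $f^*f_* = $ multiplication by $d$ on $A_0^0$ together with $f_*f^*=$ multiplication by $d$, so that $f^*:A_0^0(S')\to A_0^0(S)$ is injective with image the invariants, compatibly with $T$; and Roitman's theorem (and divisibility of $A_0^0$) which lets us tensor with $\mathbb{C}$ without losing information, so that verifying the annihilation statement over $\mathbb{C}$ suffices. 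Once these are in place, everything else is the formal module-theoretic computation sketched above. I would therefore devote the bulk of the write-up to the identification $z(K)\,T(S)=0\iff T(S/K)=0$ and treat the ideal argument as a one-line consequence.
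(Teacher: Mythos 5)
Your argument is correct and is essentially the proof from Barlow's paper (the source cited for this lemma; the present paper does not reprove it): the dictionary $T(S/K)=0\iff z(K)\cdot T(S)=0$, obtained from $p_*p^*=|K|$ and $p^*p_*=z(K)$ together with the fact that $T(S)$ is a uniquely divisible group, followed by the one-line observation that a two-sided ideal generated by annihilating elements annihilates the module. The only point you pass over is that the quotients $S/K$ may be singular, so one must fix the convention that $T$ of a quotient means $T$ of a resolution (harmless, since quotient singularities are rational); this is exactly the content of the remark following the lemma in the paper.
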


Using this result, the first author  proved the following

\begin{proposition}[{\cite[Proposition 1.3]{BauerInoue}}]
Let $S$ be a surface of general type with $p_g(S)=0$. Assume that 
$G=(\mathbb Z/ 2\mathbb Z)^2 \triangleleft \mathrm{Aut}(S)$. Then $S$ satisfies Bloch's conjecture
if and only if for each $\sigma \in G\setminus\{0\}$ the quotient $S/\sigma$ satisfies 
Bloch's conjecture.
\end{proposition}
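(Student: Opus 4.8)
The plan is to deduce the statement from Barlow's Lemma above, the only genuine input being an elementary computation in the group algebra $\mathbb{C}G$. First note that since $S$ is of general type with $p_g(S)=0$ we have $\chi(\mathcal O_S)=1-q(S)\ge 1$, hence $q(S)=0$, $\Alb(S)=0$, and so $T(S)=A_0^0(S)$; the same holds for (the minimal resolution of) each quotient $S/\sigma$, because $p_g$ and $q$ of such a resolution are computed by the $\langle\sigma\rangle$-invariant subspaces of $H^0(\Omega^2_S)$ and $H^1(\mathcal O_S)$ and therefore vanish. Thus ``Bloch's conjecture'' means ``$A_0^0=0$'' throughout, and one implication is immediate: after blowing up the (finitely many) isolated fixed points of $\sigma$ so that the quotient map becomes a finite morphism $\tilde\pi$ onto the smooth surface $\widetilde{S/\sigma}$, the induced map $\tilde\pi_*\colon A_0^0(S)=A_0^0(\mathrm{Bl}\,S)\twoheadrightarrow A_0^0(\widetilde{S/\sigma})$ is surjective (every closed point of the target is $\tilde\pi_*$ of a point of the source, degrees are preserved, rational equivalence is respected), so $A_0^0(S)=0$ forces $A_0^0(S/\sigma)=0$ for every $\sigma\in G\setminus\{0\}$.

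For the converse, write $G\setminus\{0\}=\{\sigma_1,\sigma_2,\sigma_3\}$ and set $H:=\{0\}$ and $H_i:=\langle\sigma_i\rangle$, so that $S/H=S$ and $S/H_i=S/\sigma_i$. I want to apply Barlow's Lemma with $r=3$: hypothesis ii) is exactly the assumption that each $S/\sigma_i$ satisfies Bloch's conjecture, while hypothesis i) asks that $z(H)=1$ lie in the two-sided ideal $\mathcal I\subset\mathbb{C}G$ generated by $z(H_1)=1+\sigma_1$, $z(H_2)=1+\sigma_2$, $z(H_3)=1+\sigma_3$ (as $G$ is abelian this is just the ordinary ideal). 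The key computation is that
$$z(H_1)+z(H_2)+z(H_3)=3+\sigma_1+\sigma_2+\sigma_3$$
is a unit in $\mathbb{C}G\cong\mathbb{C}^4$: its values at the four characters of $(\mathbb Z/2\mathbb Z)^2$ are $6,2,2,2$ (the trivial character gives $6$; each nontrivial character takes the value $-1$ on exactly two of the $\sigma_i$ and $+1$ on the third, giving $3-1-1+1=2$), none of them zero. Hence $1\in\mathcal I$, and Barlow's Lemma yields $T(S)=T(S/H)=0$, i.e. $A_0^0(S)=0$.

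The whole argument thus reduces to the identity $1\in\mathcal I$, and I would emphasise two small points around it. First, all three involutions are genuinely needed: $z(H_1)$ and $z(H_2)$ are supported, in the character decomposition, on only two characters each, their product set of supports missing exactly one of the four characters, so $1\notin(z(H_1),z(H_2))$ — one really must pass through all three quotients $S/\sigma_i$. Second, some care is needed so that ``$T(S/\sigma_i)=0$'' is meaningful and Barlow's Lemma applies: an involution on a smooth surface has fixed locus a disjoint union of smooth curves and isolated points, so $S/\sigma_i$ acquires only ordinary double points; one replaces $S$ by the $G$-equivariant blow-up of all these fixed points (which does not change any $A_0^0$, only rational exceptional curves being contracted), on which every quotient $\widetilde S/H$ is smooth. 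Beyond this routine bookkeeping there is no real obstacle: all the geometric substance is hidden in Barlow's Lemma, which we are free to quote.
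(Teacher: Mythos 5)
Your argument is correct and is essentially the intended one: apply Barlow's lemma with $H=\{0\}$ and $H_i=\langle\sigma_i\rangle$, the whole content being that $1$ lies in the ideal $(1+\sigma_1,1+\sigma_2,1+\sigma_3)$ --- e.g.\ via the identity $z(H_1)+z(H_2)+z(H_3)-z(H_1)z(H_2)=2$, which is equivalent to your unit computation through the four characters of $(\mathbb Z/2\mathbb Z)^2$. The only (harmless) over-claim is that a single round of equivariant blow-ups makes every quotient smooth --- a point that is isolated in $Fix(\sigma_j)$ but lies on a fixed curve of $\sigma_i$ produces a new isolated fixed point of $\sigma_i$ on the exceptional curve --- but this does not matter, since Barlow's lemma is applied with $S/H=S$ already smooth and $T$ of a nodal quotient equals $T$ of its resolution, exactly as the Remark following the Proposition records.
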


\begin{remark}
Note that $S/\sigma$ is a surface with at most nodes as singularities and denoting by $X_\sigma\rightarrow S/\sigma$ 
the resolution of its singularities, $X_\sigma$ is minimal and has $p_g=0$. Moreover, since nodes are 
rational singularities, $T(S/\sigma)=T(X_\sigma)$.
\end{remark}

Using the fact that by the result of Bloch, Kas and Liebermann (cf. \cite{bkl}) Bloch's conjecture is true for surfaces $S$ with $\kod(S) \leq 1$, we obtain the following: 
\begin{corollary}[{\cite[Corollary 1.5]{BauerInoue}}]\label{corBl}
Let $S$ be a surface of general type with $p_g(S)=0$ and assume that 
$G=(\mathbb Z/ 2\mathbb Z)^2 \triangleleft \mathrm{Aut}(S)$. Assume that for each $\sigma \in G\setminus\{0\}$
 the quotient $S/\sigma$ has $\mathrm{kod}(S/\sigma)\leq 1$, then $S$ satisfies 
Bloch's conjecture.
\end{corollary}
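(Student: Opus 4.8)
The plan is to deduce the corollary directly from the preceding Proposition together with the theorem of Bloch, Kas and Liebermann. By that Proposition, since $p_g(S)=0$ and $G=(\mathbb Z/2\mathbb Z)^2\triangleleft\Aut(S)$, the surface $S$ satisfies Bloch's conjecture if and only if $T(S/\sigma)=0$ for every $\sigma\in G\setminus\{0\}$. So it suffices to verify this vanishing for each of the three nontrivial involutions in $G$, using only the hypothesis $\kod(S/\sigma)\leq 1$.

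Fix such a $\sigma$. First I would invoke the Remark above: the quotient $S/\sigma$ has at most nodes as singularities, its resolution $\pi\colon X_\sigma\to S/\sigma$ is a smooth minimal surface with $p_g=0$, and, because nodes are rational singularities, one has $T(S/\sigma)=T(X_\sigma)$. This transports the problem to the smooth surface $X_\sigma$, where the theorem of Bloch, Kas and Liebermann can be brought to bear.

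The one point that needs care is the behaviour of the Kodaira dimension under $\pi$. A node is a rational double point (an $A_1$-singularity), hence a canonical singularity, so the plurigenera of the normal surface $S/\sigma$ and of its minimal resolution $X_\sigma$ coincide; consequently $\kod(X_\sigma)=\kod(S/\sigma)\leq 1$. With this in hand, the theorem of Bloch, Kas and Liebermann applies to the smooth surface $X_\sigma$ of Kodaira dimension at most $1$ and yields $T(X_\sigma)=0$, whence $T(S/\sigma)=T(X_\sigma)=0$.

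Since $\sigma\in G\setminus\{0\}$ was arbitrary, all three quotients satisfy $T(S/\sigma)=0$, and the preceding Proposition then gives $T(S)=0$, that is, $S$ satisfies Bloch's conjecture. The argument is thus a formal combination of the quoted results; the nearest thing to an obstacle is the invariance of the Kodaira dimension under the resolution of the nodes, which is what licenses the passage from the hypothesis on $S/\sigma$ to the smooth surface $X_\sigma$ to which the Bloch--Kas--Liebermann theorem directly applies.
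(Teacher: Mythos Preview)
Your proof is correct and follows exactly the approach the paper indicates: the paper simply states that the corollary is obtained by combining the preceding Proposition with the Bloch--Kas--Liebermann theorem, and you have spelled out precisely this combination, together with the passage to the minimal resolution $X_\sigma$ via the preceding Remark. Your additional care about the invariance of the Kodaira dimension under resolution of nodes (canonical singularities) is a welcome clarification of a point the paper leaves implicit.
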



\section{Involutions on surfaces of general type} 
In this section we collect some results regarding involutions on  surfaces of general type, 
that we  need in Section \ref{main}. We start fixing some notation.

Let $S$ be a minimal regular surface of general type with an involution  $\sigma$.
Then $\sigma$ is biregular and its fixed locus  is the union of $k$ isolated points $P_1\ldots,P_k$
and a smooth (not necessarily connected) curve $R$.
We denote by $p\colon S\rightarrow \Sigma:=S/\langle\sigma\rangle$ the projection onto the quotient,
by $B$ the image of $R$ and by $Q_j$ the image of $P_j$, $j=1,\ldots, k$.
The surface $\Sigma$ is normal, $Q_1,\ldots, Q_k$ are nodes and they are the only singularities of $\Sigma$.
Let $h\colon V \rightarrow S$ be the blow-up of $S$ at $P_1,\ldots, P_k$ and  $E_j$ be the exceptional 
curve over $P_j$, $j=1,\ldots, k$.\\
The involution $\sigma$ induces an involution $\tilde{\sigma}$ on $V$ whose fixed locus is the union of 
$R'=h^{-1}R$ and of $E_1,\ldots, E_k$.
Let $\pi\colon V\rightarrow W:=V/\langle\tilde{\sigma}\rangle$ be the projection onto the quotient
and set $B':=\pi(R')$, $ A_j:=\pi(E_j)$, $j=1,\ldots, k$.
The surface $W$ is smooth and the $A_j$ are disjoint $(-2)$-curves.
Let $g$  be the morphism induced by $h$, $g$ is the minimal resolution of the singularities of $\Sigma$ and 
 we have the following commutative diagram:
\begin{equation}
\xymatrix{
V \ar[r]^h \ar[d]_\pi&S \ar[d]^p\\
W\ar[r]_g& \Sigma
}
\end{equation}
 
\noindent Let  $ B'':=B'+\sum_{j=1}^k A_j$ be the branch divisor of $\pi$, one has 
$\pi_*\mathcal{O}_V= \mathcal{O}_W\oplus \mathcal{O}_W(-\Delta')$ with $B''\equiv 2\Delta'$.

Now let us assume that $W$ is of general type. We denote by $P$ the minimal model of  $W$, 
by $\rho\colon W\rightarrow P$  the corresponding projection and 
let $\overline{B}:= \rho_*(B'')$. Then $\overline{B}$ is an even divisor linearly equivalent to $2\Delta$, 
where $\Delta:=\rho_*(\Delta')$. $V$ is the canonical resolution of the double cover of $P$ 
branched along $\overline{B}$ and, by \cite[Lemma 6]{Horikawa}, one has:
\begin{equation}\label{eqH1}
K_S^2-k=K_V^2=2(K_P+\Delta)^2-2\sum_i (x_i-1)^2\,,
\end{equation}
\begin{equation}\label{eqH2}
\chi(\mathcal O_S)=\chi(\mathcal O_V)=2\chi(\mathcal O_P)+\frac{1}{2}(K_P+\Delta).\Delta-\frac{1}{2}\sum_ix_i (x_i-1)\,,
\end{equation} 
where $x_i:= \lfloor \frac{m_i}{2}\rfloor$, being $m_i$ the multiplicity of the singular point $b_i$ of $\overline{B}$.
We recall the following: 
\begin{proposition}[{\cite[Proposition 1]{Bom}}]\label{bom}
Let $S$ be a minimal surface of general type. Let $C$ be an irreducible curve on $S$, then
$K_S.C\geq 0$ and if $K_S.C= 0$, then $C^2=-2$ and $C$ is a rational non-singular curve.
\end{proposition}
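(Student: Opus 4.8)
The plan is to derive all three conclusions from the two structural facts that hold for a minimal surface of general type, namely $K_S^2>0$ and the effectivity of some positive multiple $mK_S$, combined with three standard tools: the Hodge Index Theorem, the adjunction (genus) formula $C^2+K_S\cdot C=2p_a(C)-2$, and the defining feature of minimality, that $S$ contains no $(-1)$-curve. Since $mK_S$ is effective, $K_S$ has positive intersection with any ample class $H$, so $K_S$ lies in the component $\mathcal P^+$ of the positive cone $\{D : D^2>0\}$ that contains the ample classes.

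First I would establish the inequality $K_S\cdot C\ge 0$ by contradiction. Suppose $K_S\cdot C<0$ for some irreducible curve $C$. If $C^2\ge 0$, then $C$ is a nonzero effective class with $C\cdot H>0$, so $C$ lies in the closure $\overline{\mathcal P^+}$; by the light-cone positivity that follows from the Hodge Index Theorem, an element of $\mathcal P^+$ meets a nonzero element of $\overline{\mathcal P^+}$ strictly positively, whence $K_S\cdot C>0$, a contradiction. Therefore $C^2<0$. Feeding $K_S\cdot C<0$ and $C^2<0$ into the adjunction formula gives $C^2+K_S\cdot C=2p_a(C)-2\ge -2$; as both summands are negative integers, the only possibility is $C^2=K_S\cdot C=-1$, which forces $p_a(C)=0$. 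An irreducible curve of arithmetic genus $0$ is smooth rational, so $C$ would be a $(-1)$-curve, contradicting the minimality of $S$. This proves $K_S\cdot C\ge 0$.

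For the second and third assertions, assume now $K_S\cdot C=0$. Since $K_S^2>0$ and $K_S\cdot C=0$, the Hodge Index Theorem yields $C^2\le 0$, with equality only if $C$ is numerically trivial; but an irreducible curve satisfies $C\cdot H>0$ and is therefore not numerically trivial, so $C^2<0$. The adjunction formula now reads $C^2=2p_a(C)-2<0$, which forces $p_a(C)=0$ and hence $C^2=-2$. Finally $p_a(C)=0$ together with irreducibility shows that $C$ is a smooth rational curve, completing the proof.

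I expect the only real subtlety to be the sign bookkeeping in the first step: one must be careful to orient $K_S$ into the correct half $\mathcal P^+$ of the positive cone (this is exactly where the effectivity of a multiple of $K_S$ enters) before invoking light-cone positivity, and then to exploit the integrality of the intersection numbers together with $p_a(C)\ge 0$ to pin down the $(-1)$-curve. The remaining steps are direct consequences of the Hodge Index Theorem and adjunction.
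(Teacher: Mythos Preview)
Your argument is correct and is the standard proof of this classical fact. Note, however, that the paper does not give its own proof of this proposition: it is quoted verbatim from Bombieri \cite[Proposition~1]{Bom} and used as a black box, so there is no proof in the paper to compare against. Your write-up reproduces essentially Bombieri's original reasoning---nefness of $K_S$ via the exclusion of $(-1)$-curves, then Hodge index plus adjunction to force $C^2=-2$ and $p_a(C)=0$ when $K_S\cdot C=0$---so there is no methodological divergence to discuss.
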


We can now prove:

\begin{proposition}\label{NotGT}
Let $S$ be a (minimal) surface of general type with $K_S^2=6$, $p_g(S)=0$ and that contains no rational curves except
at most a $(-2)$-curve $L$. 
Let $\sigma$ be an involution on $S$ such that one of the following holds:
\begin{itemize}[leftmargin=.9cm]
\item[(i)] either $Fix(\sigma)$ contains more than 8 isolated points and a non-rational curve;
\item[(ii)] or  $Fix(\sigma)$ is given by 6 isolated  points and an elliptic curves $C$ 
such that $C\cap L=\emptyset$;	
\item[(iii)] or  $Fix(\sigma)$ is given by 4 isolated  points, an elliptic curves $C$  and the $(-2)$-curve $L$: $C\cap L=\emptyset$.
\end{itemize}
Then $\Sigma:=S/\langle\sigma\rangle$ is not of general type.
\end{proposition}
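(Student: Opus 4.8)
\section*{Proof proposal}

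The plan is to argue by contradiction: \emph{assume that $\Sigma$ is of general type}, and derive a contradiction in each of the three cases. Under this assumption $W$, being the minimal resolution of $\Sigma$ (whose only singularities are rational double points), is a smooth surface of general type. Since $S$ is of general type with $p_g(S)=0$ we have $q(S)=0$ and $\chi(\mathcal O_S)=1$; the eigenspace decomposition $H^i(\mathcal O_V)=H^i(\mathcal O_W)\oplus H^i\big(\mathcal O_W(-\Delta')\big)$ of $H^i(\mathcal O_V)\cong H^i(\mathcal O_S)$ then forces $q(W)=p_g(W)=0$, so $\chi(\mathcal O_W)=1$. Hence the minimal model $P$ of $W$ satisfies $\chi(\mathcal O_P)=1$, $p_g(P)=q(P)=0$, and $1\le K_P^2\le 9$.

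The heart of the argument is a numerical identity. Using $\pi^*B'=2R'$, $\pi^*A_j=2E_j$, $A_j^2=-2$, $R'^2=R^2$ (the blow–up $h$ is an isomorphism near $R$), $K_W\cdot A_j=0$ and $K_V=h^*K_S+\sum E_j$, one computes $(\Delta')^2=\tfrac12(R^2-k)$, $K_W\cdot\Delta'=\tfrac12(K_S\cdot R-R^2)$, hence $\Delta'\cdot(\Delta'+K_W)=\tfrac12(K_S\cdot R-k)$; plugging this into $1=\chi(\mathcal O_V)=2\chi(\mathcal O_W)+\tfrac12\Delta'\cdot(\Delta'+K_W)$ gives
$$K_S\cdot R=k-4 .$$
The same computation shows $\pi^*\!\big(K_W+\tfrac12 B'\big)=K_V-\sum E_j=h^*K_S$, so $N:=K_W+\tfrac12 B'$ is nef (because $\pi$ is finite surjective) with $N^2=\tfrac12K_S^2=3$ and $N\cdot B'=K_S\cdot R=k-4$; moreover $K_W=N-\tfrac12 B'$, so $2K_W^2=(K_S-R)^2=K_S^2-2K_S\cdot R+R^2$, i.e. $K_W^2=7-k+\tfrac12R^2$. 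Since $K_W^2\in\mathbb Z$, $R^2$ is even, hence $K_S\cdot R=2p_a(R)-2-R^2$ is even, hence $k$ is even.

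Now the case analysis. In case (iii) $R=C\sqcup L$ with $K_S\cdot L=0$ (adjunction for a $(-2)$-curve), so $K_S\cdot C=K_S\cdot R=k-4=0$; by Proposition~\ref{bom} $C$ would then be a smooth rational $(-2)$-curve, contradicting that $C$ is elliptic. In case (i), $k>8$ together with $k$ even give $k\ge 10$, hence $K_W\cdot N=N^2-\tfrac12N\cdot B'=3-\tfrac12(k-4)=5-\tfrac k2\le 0$; but $W$ of general type means $K_W$ is big, so $K_W=A+E$ with $A$ an ample $\mathbb Q$-divisor and $E\ge 0$, and since $N$ is nef with $N^2>0$ (hence big) we get $K_W\cdot N\ge A\cdot N>0$, a contradiction.

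Case (ii) is the delicate one. Here $k=6$, so $K_S\cdot C=2$ and (adjunction, $C$ elliptic) $C^2=-2$, $R^2=-2$, $K_W^2=0$; as a minimal surface of general type has $K^2\ge 1$, $W$ is not minimal, and $\rho\colon W\to P$ contracts $n=K_P^2-K_W^2=K_P^2$ $(-1)$-curves. Applying the Hodge index inequality to $\rho_*N$ (which is nef, with $(\rho_*N)^2\ge N^2=3$ and $0<\rho_*N\cdot K_P\le N\cdot K_W=2$) yields $3K_P^2\le(\rho_*N)^2K_P^2\le(\rho_*N\cdot K_P)^2\le 4$, forcing $K_P^2=1$ and $n=1$, and then the same inequality forces $N\cdot F=0$ for the contracted $(-1)$-curve $F$. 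One checks $F\cdot A_j=0$ (otherwise $\rho(A_j)$ is a rational curve on the minimal surface $P$ with $K_P\cdot\rho(A_j)<0$) and $F\cdot B'=0$ (otherwise $\Delta^2=\tfrac14\overline B^2\notin\mathbb Z$), so one is reduced to the following situation: $P$ is a numerical Godeaux surface carrying six disjoint $(-2)$-curves $\rho(A_j)$ and a smooth elliptic curve $\bar B=\rho_*B'$ with $\bar B^2=-4$, disjoint from the $\rho(A_j)$, such that $\overline B=\bar B+\sum_j\rho(A_j)$ is $2$-divisible and $K_P+\tfrac12\bar B$ is nef of square $3$. The main obstacle I foresee is ruling out this last configuration: it does not fail for lattice-theoretic reasons, so one must exploit the geometry of the smooth double cover $V\to P$ branched along $\overline B$ (from which $S$ is recovered by contracting the six $(-1)$-curves over the $\rho(A_j)$) and show it is incompatible with $K_S^2=6$, $p_g(S)=0$ and the hypothesis that $S$ carries essentially no rational curves — for instance by invoking the classification of involutions on surfaces with $p_g=0$, $K^2=6$, or by a direct analysis on numerical Godeaux surfaces. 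This is where I would expect the bulk of the work to lie.
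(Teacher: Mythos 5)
Your treatment of cases (i) and (iii) is correct and takes a genuinely different, cleaner route than the paper. The identity $K_S\cdot R=k-4$ (a repackaging of the paper's Horikawa computation (\ref{eqH2})--(\ref{eqH3}), but applied on $W$ rather than on $P$) disposes of case (iii) in one line via Proposition \ref{bom} — indeed it shows the hypotheses of (iii) are never satisfied — and the nef-and-big argument with $N=K_W+\tfrac12B'$ handles case (i) without Euler-number bookkeeping. The paper instead runs all three cases through the formula $5-\tfrac k2=K_P^2+K_P.\Delta+\sum_i(x_i-1)$ together with Noether's formula; your versions of (i) and (iii) are shorter and equally rigorous.

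Case (ii), however, contains both an internal inconsistency and a genuine gap. From $N\cdot F=0$ and $K_W\cdot F=-1$ you must get $\tfrac12 B'\cdot F=1$, i.e.\ $B'\cdot F=2$, not $B'\cdot F=0$ as you assert (note $B'\cdot F=0$ together with $F\cdot A_j=0$ would give $K_W\cdot F=N\cdot F=0$, impossible for a $(-1)$-curve); consequently the residual configuration is not the one you describe ($\rho_*B'$ has self-intersection $0$ and acquires a double point at $\rho(F)$, rather than being smooth of square $-4$ and disjoint from $F$). More importantly, the ingredient needed to finish is precisely the hypothesis you never invoke: that $S$ contains no rational curves other than (at most) $L$. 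Since $F\cdot B''=F\cdot B'=2$, the restriction of $\pi$ to $\pi^{-1}(F)$ is a double cover of $F\cong\mathbb P^1$ branched in only two points, so by Hurwitz $\pi^{-1}(F)$ is an irreducible rational curve on $V$; its image in $S$ is a rational curve that is neither a point nor $L$ (when $L$ exists, $\pi(h^{-1}L)$ is a $(-2)$-curve on $W$, whereas $F$ is a $(-1)$-curve disjoint from all $(-2)$-curves), contradicting the hypothesis. This is exactly how the paper closes case (ii): the no-rational-curve assumption forces $F$ to meet the branch divisor in at least $4$ points, i.e.\ $\overline B$ has a point with $x_i\ge2$, against $x_i=1$ — equivalently, against your own (corrected) count $F\cdot B''=2$. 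So the gap is small but real: as you suspect, the configuration cannot be excluded by lattice or numerical arguments alone, and the missing step is this Hurwitz argument on the contracted $(-1)$-curve.
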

\begin{proof}
We use the notation of above and aiming for a contradiction we assume $W$ of general type.\\ 
Since $S$ is a minimal surface of general type with $K_S^2=6$ and $p_g(S)=0$, then 
$p_g(P)=q(P)=0$ and  from formulas (\ref{eqH1}), (\ref{eqH2}) we get 
\begin{equation}\label{eqH3}
5-\frac{k}{2}=K_P^2+K_P.\Delta+\sum_i(x_i-1)\,.
\end{equation}

 \noindent We claim that $K_P.\overline{B}= 2 K_P.\Delta>0$, indeed: 
  $\overline{B}=\sum_{l=1}^r B_l+ \sum_j \rho_*A_j$, where, by assumption,
 each $B_l$ is an irreducible curve,   $r>0$ and $B_t$ is non-rational for 
 at least one  $t \in \{1,\ldots, r\}$.
By Proposition \ref{bom},  $K_P.B_l\geq 0$ for any $l=1,\ldots, r$ and $K_P.B_t> 0$ and the claim follows.

\textit{Case (i)}: $k\geq 8$ and by (\ref{eqH3}) we get:
$$1\geq K_P^2+K_P.\Delta+\sum_i(x_i-1)\,,$$
but this is not possible because $K_P^2\geq 1$ and $K_P.\Delta>0$.

\textit{Case (ii)}: $k=6$ and by (\ref{eqH3}) we get:
$$2=K_P^2+K_P.\Delta+\sum_i(x_i-1)\,,$$
therefore $K^2_P=K_P.\Delta=1$ and $x_i=1$ for any $i$. 

\noindent Since $e(V)=12$, it holds $e(W)=6+\frac{1}{2} e(B'')$; by assumption $B'':=\sum_{j=1}^6 A_j+\tilde C$
with $\tilde C:=\pi(h^{-1}(C)) $ a smooth elliptic curve: $e(B'')=12$. By Noether's formula $K^2_W=0$, hence $\rho$ is the blow-down of exactly one
  $(-1)$-curve $E$. In particular $E$  does not intersect any $(-2)$-curves.\\
By assumption, $S$  contains no rational curves except at most a  $(-2)$-curve $L$, such that $L\cap C =\emptyset$.
If $S$ contains such a curve $L$, then $\sigma$ maps $L$ onto itself hence
 two $\sigma$-fixed points lie on it;  an easy computation shows that 
$\pi(\tilde L )$ is a $(-2)$-curve on $W$, being $\tilde L\subset V$ the strict transform of $L$.
Therefore, the rational curve $E$ must intersect $\tilde C$ in at least 4 points (by Hurwitz' formula)
and $\overline{B}$ contains a 
singular point with $m_i\geq 4$, i.e. $x_i\geq 2$.

\textit{Case (iii)}: $k=4$ and by (\ref{eqH3}) we get:
$$3=K_P^2+K_P.\Delta+\sum_i(x_i-1)\,,$$
therefore $K^2_P\leq 2$.		\\
Let $\tilde C:=\pi(h^{-1}(C)) $ and $\tilde L:=\pi(h^{-1}(L))$,
 then $B''= \sum_{j=1}^4 A_j+ \tilde C+\tilde L$  is the disjoint union of five rational curves
 and an elliptic curve: $e(B'')=10$. Since $e(V)=10$,  $e(W)=\frac{1}{2} e(V)+\frac{1}{2} e(B'')=10$ and 
 by Noether's formula $2=K^2_W\leq K^2_P $.
 We get that $W=P$ is minimal,  $K_P.\Delta=1$ and $x_i=1$ for any $i$. 
By  (\ref{eqH2}), it follows $\Delta^2=-3 $ and so $-12=B''^2=-12$.
It is direct to show that $\tilde L$ is a $(-4)$-curve, hence  $\tilde C^2-12=B''^2$. 
We have an elliptic curve $\tilde C$ with $\tilde C^2=0$ on a
 minimal surface of general type, it contradicts Proposition \ref{bom}.  
 \end{proof}

\section{Generalized Burniat type surfaces}\label{GBTS}
In this section we recall  the construction of generalized Burniat type surfaces. 
For further details we refer to \cite{BCF14}.

For $j= 1,2,3$, let $E_j=\mathbb C/\langle 1, \tau_j\rangle$ be an elliptic curve
and denote by $z_j$ a uniformizing parameter on $E_j$.
Let $\mathcal{L}_j$  be the Legendre $\mathcal{L}$-function for $E_j$: $\mathcal{L}_j$ is a meromorphic function on $E_j$
and $\mathcal{L}_j\colon E_j\rightarrow \mathbb{P}^1$ is a double cover branched over four distinct points:
$\pm 1, \pm a_j\in \mathbb{P}^1\setminus \{0, \infty\}$. 
It is well known that the following statements hold  (see \cite[Lemma 3-2]{inoue} and \cite[Section 1]{BC11burniat1}):
\begin{itemize}[leftmargin=.9cm]
\item $\mathcal{L}_j(0)=1$, $\mathcal{L}_j(\frac{1}{2})=-1$,  $\mathcal{L}_j(\frac{\tau_j}{2})=a_j$,
 $\mathcal{L}_j(\frac{\tau_j+1}{2})=-a_j$;
\item let $b_j:=\mathcal{L}_j(\frac{\tau_j}{4})$, then $b_j^2=a_j$;
\item $\frac{\mathrm d\mathcal L_j}{\mathrm d z_j}(z_j)=0$ if and only if 
$z_j\in\left\{0,\frac 12, \frac {\tau_j}2, \frac {\tau_j+1}2\right\}$
 (since these are the ramification points of $\mathcal{L}_j$);

\item $\mathcal{L}_j(z_j)=\mathcal{L}_j(z_j+1)=\mathcal{L}_j(z_j+\tau_j)=\mathcal{L}_j(-z_j)=-\mathcal{L}_j\left(z_j+\frac 12\right)$;

\item $\mathcal{L}_j\left(z_j+\dfrac{\tau_j}2\right)= \dfrac{a_j} {\mathcal L_j(z_j)}$.

\end{itemize}

 For $j	\in\{ 1,2,3\}$,  we define an action 
of $\{(\zeta_j,\eta_j,\epsilon_j)\}\cong (\mathbb{Z}/2 \mathbb{Z})^3$ on $E_j$,  as follows:
\begin{equation}\label{Z23-action}
\begin{array}{ccc}
(z_j\mapsto -z_j)&\hat{=}&(1,0,0)\\[3pt]
(z_j\mapsto -z_j+\frac{\tau_j}{2})&\hat{=}&(0,1,0)\\[3pt]
(z_j\mapsto -z_j+\frac{1}{2})&\hat{=}&(0,0,1),
\end{array}
\end{equation}
and we consider the induced action of $\mathcal{G} =
\{(\zeta_1,\eta_1,\epsilon_1,\zeta_2,\eta_2,\epsilon_2,\zeta_3,\eta_3,\epsilon_3)\}$
on $T:=E_1\times E_2\times E_3$. 
 We define also the following map:
\begin{equation}
\label{diag1}
\begin{array}{ccc}
\pi'\colon E_1\times E_2\times E_3 &\longrightarrow & \mathbb P^1\times\mathbb P^1\times\mathbb P^1  \\[6pt]
\phantom{\pi'}(z_1,z_2,z_3) &\longmapsto & \left(\dfrac{\mathcal L_1(z_1)}{b_1}, \dfrac{\mathcal L_2(z_2)}{b_2},\dfrac{\mathcal L_3(z_3)}{b_3}\right).
\end{array}
\end{equation}

\noindent The $\mathcal G$-action on $T$ induces, via $\pi'$, an action of
$ \mathcal H:=(\mathcal H_1)^3\cong (\mathbb Z/ 2\mathbb Z)^6$ on 
$P_1:=(\mathbb{P}^1)^3$, where 
 $\mathcal H_1\cong (\mathbb Z/ 2\mathbb Z)^2 $ acts on $\mathbb{P}^1$ in this way:
\begin{equation}\label{Z22-action}
\begin{array}{ccl}
(1,0)&\hat{=}&((s:t)\mapsto (t:s)),\\
(0,1)&\hat{=}&((s:t)\mapsto (s:-t)),
\end{array}
\end{equation}
being $(s:t)$  homogeneous coordinates of $\mathbb{P}^1$. 

\noindent Let $Y\subset \mathbb P^1\times\mathbb P^1\times\mathbb P^1$ be an irreducible Del Pezzo surface 
of degree 6 invariant under a subgroup $H \cong (\mathbb Z/ 2\mathbb Z)^2\ \triangleleft\mathcal H$.\\
The inverse image $\hat{X}:=\pi'^{-1}(Y)$ of $Y$ under $\pi'$ is an irreducible hypersurface in the product of three smooth elliptic curves 
$T:= E_1 \times E_2 \times E_3$, which is of multi degree $(2,2,2)$.
\begin{definition}
 $\hat X$  is called a  \textit{Burniat hypersurface in $T$}.
\end{definition}

\begin{remark}\label{B-hyp}
According to \cite{BCF14}, 
every Burniat hypersurface is given by one of the following equations: 
\begin{equation}\label{righteq1}
\begin{array}{r}
\hat X_\nu= \{(z_1,z_2,z_3) \in T \mid\nu_1(\mathcal L_1(z_1)\mathcal L_2(z_2)\mathcal L_3(z_3) +  b_1b_2b_3)+
 \\[6pt]
\nu_2(\mathcal L_1(z_1) b_2b_3 +  b_1\mathcal L_2(z_2)\mathcal L_3(z_3))=0 \}	\,,
\end{array}
\end{equation}
\begin{equation}\label{righteq2}
\hat X_{\mu}= \{(z_1,z_2,z_3) \in T \mid \mathcal L_1(z_1)\mathcal L_2(z_2)\mathcal L_3(z_3)=\mu \}\,,
\end{equation}
\begin{equation}\label{righteq3}
\hat X_b= \{(z_1,z_2,z_3) \in T \mid \mathcal L_1(z_1)\mathcal L_2(z_2)\mathcal L_3(z_3)=b_1b_2b_3\}\,,
\end{equation}

\noindent where ${\nu}:=(\nu_1:\nu_2) \in \mathbb P^1$, $\mu\in \mathbb C$ and $b:=b_1b_2b_3$. 

\noindent Recall  that we are
considering only values of ${\nu}$ (resp.  $\mu$) such that $\hat X_\nu$ (resp. $\hat X_\mu$) is irreducible, i.e.
$(\nu_1/\nu_2) \neq \pm 1 $ and $\mu \neq 0$.
\end{remark}

\begin{remark}
By construction, a Burniat hypersurface  $\hat X$ has at most finitely many nodes as singularities. Therefore, denoting by $\epsilon\colon X' \rightarrow \hat X$ the minimal resolution of its singularities, we have 
that  $K_{X'}=\epsilon^* K_{\hat X}$ and 
 $X'$ is a minimal surface of general type with $K^2_{X'}=48$ and $\chi(X')=8$.
\end{remark}

Let $\mathcal G_0\cong (\mathbb Z/ 2\mathbb Z)^6\triangleleft\, \mathcal G\cong
(\mathbb Z/ 2\mathbb Z)^3 \times(\mathbb Z/ 2\mathbb Z)^3
\times (\mathbb Z/ 2\mathbb Z)^3$ be the group: 
$$\mathcal G_0:=\{(\zeta_1,\eta_1,\epsilon_1,\zeta_2,\eta_2,\epsilon_2,\zeta_3,\eta_3,\epsilon_3) 
\mid\eta_1=\eta_2=\eta_3,\, \epsilon_1+\epsilon_2+\epsilon_3=0\}\,.$$

\noindent Then we have the following:

\begin{lemma}[{\cite{BCF14}}] 
\begin{enumerate}
\item   $\hat X_\nu$ is invariant under the group 
$$\mathcal G'_1:=\{(\zeta_1,\eta_1,0,\zeta_2,\eta_1,\epsilon_2,\zeta_3,\eta_1,\epsilon_3)  \mid \epsilon_2+\epsilon_3=0\}\cong (\mathbb Z/ 2\mathbb Z)^5\triangleleft \mathcal G_0\,.$$
\item  $\hat X_\mu$ is invariant under the group
$$\mathcal G_1:=\{(\zeta_1,0,\epsilon_1,\zeta_2,0,\epsilon_2,\zeta_3,0,\epsilon_3)   \mid
 \epsilon_1+\epsilon_2+\epsilon_3=0\}\cong (\mathbb Z/ 2\mathbb Z)^5\triangleleft \mathcal G_0\,.$$
\item $\hat X_b$ is invariant under $\mathcal G_0$. 
\end{enumerate}
\end{lemma}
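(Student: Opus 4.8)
The plan is to reduce the whole statement to a short calculation on generators, using the transformation rules for the Legendre functions $\mathcal L_j$ recorded above. First I would set $w_j:=\mathcal L_j(z_j)$, a meromorphic function on $E_j$, so that each of the three hypersurfaces is (the closure of) the zero locus of an explicit rational function of $w_1,w_2,w_3$ on $T$. From the definition (\ref{Z23-action}) of the $(\mathbb Z/2\mathbb Z)^3$-action on the $j$-th factor, together with the identities $\mathcal L_j(-z_j)=\mathcal L_j(z_j)$, $\mathcal L_j(z_j+\tfrac12)=-\mathcal L_j(z_j)$, $\mathcal L_j(z_j+\tfrac{\tau_j}2)=a_j/\mathcal L_j(z_j)$ and $b_j^2=a_j$, one reads off how $w_j$ transforms under the three generators:
$$\zeta_j\colon w_j\mapsto w_j,\qquad \eta_j\colon w_j\mapsto \frac{b_j^2}{w_j},\qquad \epsilon_j\colon w_j\mapsto -w_j.$$
Since $\mathcal G$ is elementary abelian, every subgroup of $\mathcal G_0$ is automatically normal, and the isomorphism types of $\mathcal G_0,\mathcal G_1,\mathcal G'_1$ claimed in the statement drop out by counting the independent linear conditions cutting them out of $(\mathbb Z/2\mathbb Z)^9$; the inclusions $\mathcal G_1,\mathcal G'_1\triangleleft\mathcal G_0$ are immediate from the defining relations. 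So what is left is to verify invariance on a generating set of each of the three groups.

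Parts (3) and (2) I expect to be immediate, since the equations (\ref{righteq3}) and (\ref{righteq2}) involve $w_1,w_2,w_3$ only through the product $w_1w_2w_3$. Each $\zeta_j$ fixes this product; the diagonal element $\eta:=(0,1,0,0,1,0,0,1,0)$ sends it to $(b_1b_2b_3)^2/(w_1w_2w_3)$, which on $\hat X_b$ equals $b_1b_2b_3$; and any $\epsilon$-element of $\mathcal G_0$ changes the sign of an even number of the $w_j$ — because $\epsilon_1+\epsilon_2+\epsilon_3=0$ — and therefore fixes the product. As $\mathcal G_0$ is generated by $\zeta_1,\zeta_2,\zeta_3$, by $\eta$, and by two such $\epsilon$-elements, this gives (3); for (2) no $\eta$ occurs (since $\mathcal G_1$ forces $\eta_1=\eta_2=\eta_3=0$) and the same parity argument applies verbatim.

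The substantive case is (1). Writing $F_\nu:=\nu_1(w_1w_2w_3+b_1b_2b_3)+\nu_2(w_1b_2b_3+b_1w_2w_3)$, the generators $\zeta_1,\zeta_2,\zeta_3$ of $\mathcal G'_1$ fix every $w_j$ and hence $F_\nu$, while the element $\epsilon_{23}:=(0,0,0,0,0,1,0,0,1)\in\mathcal G'_1$ fixes $w_1$ (the $\epsilon_1$-coordinate is forced to $0$ in $\mathcal G'_1$) and sends $(w_2,w_3)$ to $(-w_2,-w_3)$, hence fixes $w_2w_3$ and so $F_\nu$. For the diagonal $\eta$ I would substitute $b_j^2/w_j$ for $w_j$ and clear denominators by multiplying by $w_1w_2w_3/(b_1b_2b_3)$; the four resulting terms should collapse back to $F_\nu$, yielding $\eta^*F_\nu=(b_1b_2b_3/w_1w_2w_3)\,F_\nu$ as rational functions on $T$. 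Since $\hat X_\nu$ is irreducible of multidegree $(2,2,2)$ it dominates each factor $E_j$, so the locus in $\hat X_\nu$ where all $w_j$ are finite and nonzero is nonempty, hence dense; there $F_\nu$ and $\eta^*F_\nu$ have the same zero set, so the automorphism $\eta$ carries $\hat X_\nu$ onto itself, and since $\mathcal G'_1$ is generated by $\zeta_1,\zeta_2,\zeta_3,\eta,\epsilon_{23}$ this finishes (1). The one genuinely non-formal step — and the place I expect any friction — is that identity $\eta^*F_\nu=(b_1b_2b_3/w_1w_2w_3)F_\nu$: it is exactly where the relations $\mathcal L_j(z_j+\tau_j/2)=a_j/\mathcal L_j(z_j)$ and $b_j^2=a_j$ are used, and it is visible only for the particular normal forms of Remark \ref{B-hyp}; everything else is the elementary abelian structure of $\mathcal G$ together with counting the parity of sign changes.
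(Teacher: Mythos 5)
Your proposal is correct, and it is the natural (essentially the only) argument: the paper itself gives no proof of this lemma, deferring to \cite{BCF14}, but your reduction to the generator-by-generator transformation rules $\zeta_j\colon w_j\mapsto w_j$, $\eta_j\colon w_j\mapsto b_j^2/w_j$, $\epsilon_j\colon w_j\mapsto -w_j$ is exactly the mechanism the paper uses implicitly elsewhere (e.g.\ in the induced $\mathcal H$-action on $(\mathbb P^1)^3$ and in the computations of Proposition \ref{NumberOfSing}). The one non-formal identity you flag, $\eta^*F_\nu=(b_1b_2b_3/w_1w_2w_3)\,F_\nu$, does check out, and the density/irreducibility argument correctly handles the poles and zeroes of the $\mathcal L_j$.
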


\begin{remark} 

Let $g\in \mathcal G_0 \setminus \{0\}$ be an element fixing points on $ T$.
By \cite[Proposition 4.3]{BC13}, $g$ is then an element in Table \ref{FixEl}. 

\begin{table}[!h]
\caption{The element of $\mathcal G_0$ having fixed points on $T$}
\label{FixEl}
\begin{tabular}{p{.3cm}||p{.3cm}p{.3cm}p{.3cm}||p{.3cm}p{.3cm}p{.3cm}p{.3cm}p{.3cm}p{.3cm}
||p{.3cm}p{.3cm}p{.3cm}p{.3cm}p{.3cm}p{.3cm}p{.3cm}p{.4cm}}
&$g_1$&$g_2$&$g_3$&$g_4$&$g_5$&$g_6$&$g_7$&$g_8$&$g_9$&
$g_{10}$&$g_{11}$&$g_{12}$&$g_{13}$&$g_{14}$&$g_{15}$&$g_{16}$&$g_{17}$\\
\hline 
$\zeta_1$ &  $ 0 $ & $ 0 $& $ 1 $&$ 0 $ &$ 1 $& $ 1 $&$ 0 $ & $ 0 $ & $ 0 $&$ 1 $ &$ 1 $&$ 0 $ &  $0$& 
$0$ & $ 0 $& $ 1 $& $ 1 $ 	\\
$\eta_1 $&      	$ 0 $ & $ 0 $& $ 0 $& $ 0 $& $ 0 $&$ 0 $ & $ 0 $&$ 0 $ & $ 0 $&$ 0 $ &$ 0 $&$ 0 $ & $ 0 $&
$ 1 $ & $ 1 $& $ 1 $&$ 1 $\\
$\epsilon_1 $&  $ 0 $ & $ 0 $& $ 0 $& $ 0 $& $ 0 $&$ 0 $ & $ 0 $&$ 1 $ & $ 1 $&$ 0 $ &$ 0 $&$ 1 $ & $ 1 $&
$ 0 $ & $ 0 $& $ 1 $&$ 1 $\\
\hline
$\zeta_2$&        	$ 0 $ & $ 1 $& $ 0 $& $ 1 $& $ 0 $& $ 1 $& $ 0 $&$ 0 $ & $ 0 $&$ 1 $ &$ 0 $&$ 1 $ & $ 0 $&
$ 0 $ & $ 1 $& $ 0 $&$ 1 $\\
$\eta_2 $&      	$ 0 $ & $ 0 $& $ 0 $& $ 0 $& $ 0 $&$ 0 $ & $ 0 $&$ 0 $ & $ 0 $&$ 0 $ &$ 0 $&$ 0 $ & $ 0 $&
$ 1 $ & $ 1 $& $ 1 $&$ 1 $\\
$\epsilon_2$& 	$ 0 $ & $ 0 $& $ 0 $& $ 0 $& $ 0 $& $ 0 $& $ 1 $&$ 0 $ & $ 1 $&$ 0 $ &$ 1 $& $ 0 $& $ 1 $&
$ 0 $ & $ 1 $& $ 0 $&$ 1 $\\
\hline
$\zeta_3$& 			$ 1 $ & $ 0 $& $ 0 $& $ 1 $& $ 1 $& $ 0 $& $ 0 $&$ 0 $ & $ 0 $& $ 1 $&$ 0 $& $ 0 $& $ 1 $&
$ 0 $ & $ 1 $& $ 1 $&$ 0 $\\
$\eta_3 $&      	$ 0 $ & $ 0 $& $ 0 $& $ 0 $& $ 0 $&$ 0 $ & $ 0 $&$ 0 $ & $ 0 $&$ 0 $ &$ 0 $&$ 0 $ & $ 0 $&
$ 1 $ & $ 1 $& $ 1 $&$ 1 $\\
$\epsilon_3$&	$ 0 $ & $ 0 $& $ 0 $& $ 0 $& $ 0 $& $ 0 $& $ 1 $&$ 1 $ & $ 0 $&$ 0 $ &$ 1 $& $ 1 $& $ 0 $&
$ 0 $ & $ 1 $& $ 1 $&$ 0 $\\
\end{tabular}
\end{table}
1) Let $\hat X:=\hat X_b$. In Table \ref{FixEl}, the elements $g_1$-$g_3$ fix pointwise a surface $S\subset T$.
Each element $g_4$-$g_9$ fixes pointwise a curve $C\subset T$ and its fixed locus has non trivial 
intersection with $\hat X$ since $\hat X\subset T$ is an ample divisor.
Finally, the elements $g_{10}$-$g_{17}$  have isolated fixed points on $T$; 
in particular, the elements $g_{11}$-$g_{17}$ have fixed points on $\hat X$,
 while the fixed locus of element $g_{10}$
intersects $\hat X$ only for special choices of the three elliptic curves.

2) The same holds for $\hat X_{\nu}:=\pi'^{-1}(Y_{\nu})$ (resp. $\pi'^{-1}(Y_\mu)$), 
considering only the elements  $g_{1}$-$g_{7}$,$g_{10}$,$g_{11}$,$g_{14}$,$g_{15}$ (resp. $g_{1}$-$g_{13}$),
i.e. the ones belonging to $\mathcal G'_1$ (resp. $\mathcal G_1$).
In particular, the fixed locus of  element $g_{10}$ intersects $\hat X$ only for special choices of the three elliptic curves and the parameter $\nu$ (resp. $\mu$).
\end{remark}

\begin{definition}
Let $\hat X$ be a  Burniat hypersurface in $E_1 \times E_2 \times E_3$ and let  $G\cong  (\mathbb Z/ 2\mathbb Z)^3$ be 
a subgroup of $\mathcal G_0$ 
 acting  freely on $\hat X$.
The minimal resolution $S$ of the quotient surface $\hat S:= \hat X/G$ is called a 
\textit{generalized Burniat  type (GBT) surface}. We call $\hat S$ the \textit{quotient model of $S$}. 
\end{definition}

\begin{remark}1)  Since $G$ acts freely and $\hat X$ has at most nodes as singularities, 
$\hat S$ is singular if and only if $\hat X$ is singular and  $\hat S$ has at most nodes as singularities.

2) A generalized Burniat type surface $S$ is a smooth minimal surface of general type with
$K^2_S=6$ and $\chi(S)=1$.
\end{remark}

In \cite{BCF14}, GBT surfaces have been completely classified. In particular, it has been shown there are exactly four families of GBT surfaces with 
$p_g=q=0$:

 \begin{theorem}\label{clthm}

Let $S\rightarrow \hat S= \hat X /G$ be a regular generalized Burniat type surface $S$ then
 $(\hat{X},G) \in \{(\hat X_\nu,G_1),\, (\hat X_\mu,G_2),\,(\hat X_b,G_j), \,j=3,4\}$, where
 the groups $G_1,\,G_2,\,G_3,\,G_4$ are  in Table \ref{q0}.
 \begin{table}[!h]
\begin{tabular}{c|ccc|ccc|ccc|c}
 & $\zeta_1$ & $\eta_1$& $\epsilon_1$  & $\zeta_2$ & $\eta_2$& $\epsilon_2$ & $\zeta_3$ & $\eta_3$& $\epsilon_3$&\quad\\
 \hline
 \multirow{3}{*}{$G_1$}&1&0&0&1&0&0&1&0&0   \\
&0&1&0&1&1&0&1&1&0\\
&0&0&0&0&0&1&1&0&1\\
\hline
 \multirow{3}{*}{$G_2$}&1&0&0&0&0&1&1&0&1\\
&0&0&1&0&0&0&1&0&1\\
&0&0&0&1&0&1&0&0&1\\

	\hline
\multirow{3}{*}{$G_3$}&1&0&0&0&0&1&1&0&1\\
&0&1&0&0&1&0&1&1&0\\
&0&0&1&1&0&1&1&0&0\\
\hline
\multirow{3}{*}{$G_4$}&1&0&1&0&0&1&1&0&0\\
&0&1&0&0&1&0&1&1&0\\
&0&0&0&1&0&1&1&0&1\\
\hline
&&&&&&&&&\\	
\end{tabular}\caption{The groups $G_j$}\label{q0}
\end{table}

\end{theorem}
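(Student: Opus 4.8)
The plan is to express the regularity of $S$ --- equivalently, since $\chi(S)=1$, the vanishing of $q(S)$ --- as a condition on the sign representation of $G$ on a three--dimensional cohomology group, and then to turn the classification into a finite combinatorial search. First I would reduce $q(S)$ to invariant cohomology ``upstairs''. Since $G$ acts freely on $\hat X$ and all singularities in sight are nodes (rational singularities), $G$ lifts to a free action on the minimal resolution $X'\to\hat X$, $S=X'/G$, and $q(S)=h^1(\mathcal O_{X'})^G=h^1(\mathcal O_{\hat X})^G$. To identify the $\mathcal G_0$--module $H^1(\mathcal O_{\hat X})$ I would use the structure sequence
\[
0\longrightarrow \mathcal O_T(-\hat X)\longrightarrow \mathcal O_T\longrightarrow \mathcal O_{\hat X}\longrightarrow 0
\]
on $T=E_1\times E_2\times E_3$. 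As $\mathcal O_T(\hat X)$ has positive degree on each elliptic factor, Künneth together with Serre duality (and $\omega_T=\mathcal O_T$) gives $H^1(\mathcal O_T(-\hat X))=H^2(\mathcal O_T(-\hat X))=0$; hence the restriction map is a $\mathcal G_0$--equivariant isomorphism $H^1(\mathcal O_T)\xrightarrow{\ \sim\ }H^1(\mathcal O_{\hat X})$, so
\[
H^1(\mathcal O_{\hat X})\;\cong\;\bigoplus_{j=1}^{3}H^1(\mathcal O_{E_j})
\]
as $\mathcal G_0$--modules (and likewise over $\mathcal G'_1,\mathcal G_1$ in the cases $\hat X_\nu,\hat X_\mu$).

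Next I would compute the action on each summand. By duality $H^1(\mathcal O_{E_j})\cong H^0(\Omega^1_{E_j})^\vee=(\mathbb C\,dz_j)^\vee$, and every generator of the $(\mathbb Z/2\mathbb Z)^3$ acting on $E_j$ in \eqref{Z23-action} has the form $z_j\mapsto -z_j+c$, hence acts on $dz_j$ by $-1$. Therefore an element of $\mathcal G_0$ with coordinate blocks $(\zeta_j,\eta_j,\epsilon_j)_{j=1,2,3}$ acts on $H^1(\mathcal O_{E_j})$ by $(-1)^{\zeta_j+\eta_j+\epsilon_j}$, and consequently
\[
q(S)=\dim H^1(\mathcal O_{\hat X})^{G}=\#\bigl\{\,j\in\{1,2,3\}\ :\ \ell_j:=\zeta_j+\eta_j+\epsilon_j\ \text{vanishes identically on }G\,\bigr\}.
\]
In particular $q(S)=0$ precisely when, for each $j\in\{1,2,3\}$, the linear form $\ell_j$ on $G$ is not identically zero.

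It then remains to enumerate. By Remark~\ref{B-hyp} and the invariance lemma above, a GBT surface arises from a pair $(\hat X,G)$ with $\hat X\in\{\hat X_\nu,\hat X_\mu,\hat X_b\}$ and $G\cong(\mathbb Z/2\mathbb Z)^3$ a subgroup of $\mathcal G'_1$, $\mathcal G_1$, $\mathcal G_0$ respectively, acting freely on $\hat X$; freeness means $G\setminus\{0\}$ avoids the elements of Table~\ref{FixEl} that actually meet the generic member of the family (namely $g_1$--$g_9$, $g_{11}$--$g_{17}$ for $\hat X_b$, and the indicated sublists for $\hat X_\nu$, $\hat X_\mu$, with $g_{10}$ admissible since its fixed locus misses a general $\hat X$). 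One then runs through the rank-$3$ subgroups of each of these $(\mathbb Z/2\mathbb Z)^n$ ($n=5,5,6$), retains those acting freely, discards those on which some $\ell_j$ is identically zero, and identifies the survivors modulo the natural symmetries of the construction --- permutations of the three elliptic curves and the relevant normalizer action, which act on the blocks $(\zeta_j,\eta_j,\epsilon_j)$. The outcome is exactly four equivalence classes, with representatives $(\hat X_\nu,G_1)$, $(\hat X_\mu,G_2)$, $(\hat X_b,G_3)$, $(\hat X_b,G_4)$ as displayed in Table~\ref{q0}; for each of these one checks directly from the two tables that $G$ acts freely and that $\ell_1,\ell_2,\ell_3$ are each non-zero on $G$.

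The conceptual reduction (from $q(S)=0$ to the sign computation) is short and clean; the real effort --- and the main obstacle --- is the finite but sizeable bookkeeping: organizing the list of $(\mathbb Z/2\mathbb Z)^3$--subgroups, testing freeness against Table~\ref{FixEl}, and, most delicately, pinning down the correct equivalence relation (the full normalizer of the construction) so that one lands on exactly these four classes, neither collapsing genuinely inequivalent families nor overcounting isomorphic ones.
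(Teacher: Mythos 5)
This theorem is not actually proved in the paper you were given: it is quoted verbatim from \cite{BCF14} (``In \cite{BCF14}, GBT surfaces have been completely classified\dots''), so there is no in-paper proof to compare against. Judged on its own terms, your reduction is correct and is the natural first step. The identification $q(S)=h^1(\mathcal O_{\hat X})^G$ (free action plus rationality of nodes), the equivariant isomorphism $H^1(\mathcal O_{\hat X})\cong\bigoplus_j H^1(\mathcal O_{E_j})$ from the structure sequence and the vanishing of $H^1$ and $H^2$ of $\mathcal O_T(-\hat X)$, and the observation that $(\zeta_j,\eta_j,\epsilon_j)$ acts on $dz_j$ by $(-1)^{\zeta_j+\eta_j+\epsilon_j}$ are all sound; the resulting criterion ``$q(S)=0$ iff each $\ell_j=\zeta_j+\eta_j+\epsilon_j$ is nonzero on $G$'' is consistent with the four groups of Table \ref{q0} (one checks directly that each $\ell_j$ is nontrivial on each $G_i$).

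The genuine gap is that the theorem's entire content is the \emph{outcome} of the finite search, and you do not carry it out: you describe the algorithm (enumerate rank-$3$ subgroups of $\mathcal G'_1$, $\mathcal G_1$, $\mathcal G_0$; test freeness against Table \ref{FixEl}; impose $\ell_j\not\equiv 0$; quotient by the normalizer/permutation symmetries) and then assert that ``the outcome is exactly four equivalence classes.'' That assertion is precisely what has to be proved, and nothing in your argument establishes it; in particular you never specify the equivalence relation precisely enough to even state what ``exactly four'' means (as you yourself note, this is the delicate point --- the literal statement $(\hat X,G)\in\{\dots\}$ only makes sense up to the symmetries of the construction, e.g.\ permuting the $E_j$ and the residual $\mathcal H$-action on the Del Pezzo $Y$). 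A secondary imprecision: freeness must be required for \emph{all} members of the family, not just the generic one --- e.g.\ for $\hat X_\nu$ one must also exclude groups containing elements whose fixed locus meets $\hat X_\nu$ for special $\nu$ in the allowed parameter range, or else restrict the parameter space accordingly (cf.\ the discussion of $g_0$ and $B_0$ in Remark \ref{eqGBT}). So what you have is a correct and well-motivated reduction to a finite combinatorial problem, but not a proof of the classification; the decisive bookkeeping, which is the substance of the theorem and is done in \cite{BCF14}, is missing.
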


To fix the notation, let us call a surface $S$ a {\it generalized Burniat type (GBT) surface of type $j$} if $S$ belongs to the (uniquely determined) family number $j$ in Tables \ref{q0}.

\begin{remark}
As shown in \cite{BCF14} GBT surfaces of type $j$ ($1 \leq j \leq 4$) have pairwise non isomorphic fundamental groups. In particular, they belong to different connected components of the moduli space of surfaces of general type.
\end{remark}

\begin{remark}\label{eqGBT}
Let $\hat S:= \hat X/G$ be the quotient model of a regular GBT surface $S$.
According to Theorem \ref{clthm}, there are the following possibilities:
\begin{itemize}[leftmargin=.9cm]
\item[a)]
$\hat X=\hat X_\nu:= \{(z_1,z_2,z_3) \in T \mid \nu_1L_1(z_1,z_2,z_3) + \nu_2L_2(z_1,z_2,z_3)=0 \}$ and $G=G_1$,
where  $\nu:=(\nu_1:\nu_2) \in\mathbb P^1 $, $(\nu_1/\nu_2)\neq \pm 1$,
\begin{eqnarray*}
L_1(z_1,z_2,z_3)&:=&\mathcal L_1(z_1)\mathcal L_2(z_2)\mathcal L_3(z_3) +b_1b_2b_3 \quad \mbox{ and }\\
L_2(z_1,z_2,z_3)&:=&\mathcal L_1(z_1) b_2b_3 + b_1\mathcal L_2(z_2)\mathcal L_3(z_3)\,.
\end{eqnarray*}

\noindent Note that $g_0:=(1,0,0,1,0,0,1,0,0)\in G_1$, and its fixed locus $F_0=\{(z_1,z_2,z_3) \in T \mid 2z_1=2z_2=2z_3=0\}$ intersects $\hat X_\nu$ if 
$$\qquad \nu \in B_0:=\{ ( L_2(z_1,z_2,z_3): -L_1(z_1,z_2,z_3) ) \,,  (z_1,z_2,z_3)\in F_0\}\,.$$

\noindent In other words, if $\nu\in B_0$ then $G_1$ does not act freely on $\hat X_\nu$ and 
therefore does not give rise to a GBT surface.

\item[b)] $\hat X=\hat X_\mu:= \{(z_1,z_2,z_3) \in T \mid \mathcal L_1(z_1)\mathcal L_2(z_2)\mathcal L_3(z_3)=
\mu \}$, with $ \mu \in \mathbb C, \mu \neq 0$ and $G=G_2$.  
Since  $g_0\in G_2$, if $\mu \in B':=  \{\mathcal L_1(z_1)\mathcal L_2(z_2)\mathcal L_3(z_3),  (z_1,z_2,z_3)\in F_0 \}$,
then $G_2$  does not act freely on $\hat X_\mu$; moreover  (see e.g. \cite{inoue}):
$$B'=\{\pm 1, \pm a_i, \pm a_i a_j, \pm a_1a_2a_3\} \,\mbox{ with }\, i\neq j\in\{1,2,3\}\,.$$
 We remark that this case gives rise to  the family of {\it primary  Burniat surfaces} (see \cite{BC11burniat1, BC13}).

\item[c)] 
 $\hat X=\hat X_{b}:=  \{(z_1,z_2,z_3) \in T \mid \mathcal L_1(z_1)\mathcal L_2(z_2)
 \mathcal L_3(z_3)=b\}$, with  $b:=b_1b_2b_3$ and $G=G_j$,  $j=3,4$.
\end{itemize}
\end{remark}

We already remarked that  $\hat X$ has at most finitely many nodes as singularities. 
The next statement shows  that  either $\hat{X}$ is smooth or has exactly  eight nodes.

\begin{proposition}\label{NumberOfSing}
Let $S\rightarrow \hat S= \hat X /G$ be a regular generalized Burniat type surface, i.e.,  
 $(\hat{X},G) \in \{(\hat X_\nu,G_1),\, (\hat X_\mu,G_2),\,(\hat X_b,G_j), \,j=3,4\}$.
Then:
\begin{itemize}[leftmargin=.9cm]
\item[1)] $\hat X_\nu$ is singular if and only if $\nu\in B:=\{(\pm b_1:1), (1:\pm b_1) \}$ and
$\Sing(\hat X_\nu)=\{(z_1, \pm\frac{1}{4}, \pm\frac{1}{4}) \mid 2z_1=0,  \nu b_1+\mathcal{L}_1(z_1)=0\}
\cup \{(z_1, \frac{\tau_2}{2}\pm\frac{1}{4}, \frac{\tau_3}{2}\pm\frac{1}{4}) \mid  2z_1=0,  b_1+\nu\mathcal{L}_1(z_1)=0\}$;
\item[2)] $\hat X_\mu$ is smooth; 
\item[3-4)] $\hat X_b$ is singular if and only if $b:=b_1b_2b_3 \in B'$ and 
$\Sing(\hat X_b)=\{(z_1,z_2,z_3)\in \hat X_b \mid 2z_1=2z_2=2z_3=0\}$.
\end{itemize}
In particular, either $\hat X$ is smooth or its singular locus consists of exactly 8 nodes.
\end{proposition}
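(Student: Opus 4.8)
The plan is to read off the singular locus of $\hat X$ directly from the explicit equations of Remark~\ref{eqGBT}. Since $\hat X$ is a Cartier divisor in the smooth threefold $T$, a point $p=(z_1,z_2,z_3)\in\hat X$ is singular precisely when a local equation $F$ of $\hat X$ at $p$ has $F(p)=\partial_1F(p)=\partial_2F(p)=\partial_3F(p)=0$, where $\partial_i=\partial/\partial z_i$. On the locus of $T$ where $\mathcal L_1,\mathcal L_2,\mathcal L_3$ are all finite we use the written equation; near a pole of some $\mathcal L_i$ we first substitute $w_i:=1/\mathcal L_i$, which is a local coordinate on $E_i$ there (the poles of $\mathcal L_i$ are simple and are not ramification points), the identity $\mathcal L_i(z_i+\tfrac{\tau_i}{2})=a_i/\mathcal L_i(z_i)$ making this legitimate. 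Throughout, the only inputs are the stated properties of $\mathcal L_i$: $\mathcal L_i'(z_i)=0$ iff $2z_i=0$ in $E_i$, in which case $\mathcal L_i(z_i)\in\{1,-1,a_i,-a_i\}$ and $\mathcal L_i$ restricts to a bijection from $E_i[2]:=\{z\in E_i:2z=0\}$ onto $\{1,-1,a_i,-a_i\}$; moreover $\mathcal L_i^{-1}(0)=\{\pm\tfrac14\}$, $\mathcal L_i^{-1}(\infty)=\{\tfrac{\tau_i}{2}\pm\tfrac14\}$, and $\deg\mathcal L_i=2$.

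Consider first $\hat X_\mu$ and $\hat X_b$, that is $\hat X_c=\{\mathcal L_1\mathcal L_2\mathcal L_3=c\}$ with $c=\mu\neq0$ or $c=b_1b_2b_3\neq0$. Taking $F=\mathcal L_1\mathcal L_2\mathcal L_3-c$ we get $\partial_iF=\mathcal L_i'(z_i)\prod_{j\neq i}\mathcal L_j(z_j)$, and since on $\hat X_c$ one has $\mathcal L_1\mathcal L_2\mathcal L_3=c\neq0,\infty$, each $\mathcal L_i(z_i)$ is finite and nonzero; hence $\partial_iF(p)=0$ forces $\mathcal L_i'(z_i)=0$, i.e.\ $2z_i=0$. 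Thus $\hat X_c$ is singular iff $c$ is a product $v_1v_2v_3$ with $v_i\in\{1,-1,a_i,-a_i\}$, i.e.\ iff $c\in B'=\{\pm\prod_{i\in I}a_i:I\subseteq\{1,2,3\}\}$, and then $\Sing(\hat X_c)=\{p:2z_i=0\ \forall i,\ \mathcal L_1\mathcal L_2\mathcal L_3=c\}$. A regular GBT surface of type $2$ requires $\mu\notin B'$ (Remark~\ref{eqGBT}(b)), so $\hat X_\mu$ is smooth; for $\hat X_b$ this gives parts 3)--4) verbatim. To count $\Sing(\hat X_b)$ when $b_1b_2b_3\in B'$: by the bijection $E_i[2]\cong\{1,-1,a_i,-a_i\}$ the singular points correspond to triples $(v_1,v_2,v_3)$ with $v_1v_2v_3=b_1b_2b_3$; writing $v_i=\varepsilon_ia_i^{t_i}$ ($\varepsilon_i=\pm1$, $t_i\in\{0,1\}$) and $b_1b_2b_3=\varepsilon\prod_{i\in I}a_i$, the condition becomes $\prod_{i:t_i=1}a_i=\pm\prod_{i\in I}a_i$; since $a_1,a_2,a_3\notin\{0,1,-1\}$ (the four branch points of each $\mathcal L_i$ are distinct), the only subsets $J\subseteq\{1,2,3\}$ with $\prod_{i\in J}a_i=\pm\prod_{i\in I}a_i$ are $J=I$ and $J=\{1,2,3\}\setminus I$, each of which pins down $\varepsilon_1\varepsilon_2\varepsilon_3$ and so leaves $4$ triples. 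Hence $\hat X_b$ has exactly $8$ nodes.

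For $\hat X_\nu$ write $F=\mathcal L_2\mathcal L_3(\nu_1\mathcal L_1+\nu_2b_1)+b_2b_3(\nu_1b_1+\nu_2\mathcal L_1)$, so that $\partial_1F=\mathcal L_1'(\nu_1\mathcal L_2\mathcal L_3+\nu_2b_2b_3)$, $\partial_2F=\mathcal L_2'\mathcal L_3(\nu_1\mathcal L_1+\nu_2b_1)$ and $\partial_3F=\mathcal L_2\mathcal L_3'(\nu_1\mathcal L_1+\nu_2b_1)$ (with the corresponding expressions in the charts $w_2=1/\mathcal L_2$, $w_3=1/\mathcal L_3$). A case analysis on whether $\mathcal L_2(z_2)$ and $\mathcal L_3(z_3)$ are $0$, $\infty$, or neither --- using the irreducibility hypothesis $\nu_1/\nu_2\neq\pm1$, which in every ``intermediate'' configuration produces a relation of the shape $(\nu_1^2-\nu_2^2)\cdot(\text{unit})=0$, a contradiction --- shows that the only candidates are: $z_1$ with $2z_1=0$ and $\mathcal L_1(z_1)=-\tfrac{\nu_1}{\nu_2}b_1$, together with $\mathcal L_2(z_2)=\mathcal L_3(z_3)=0$, i.e.\ $z_2,z_3\in\{\pm\tfrac14\}$; or (in the pole chart) $z_1$ with $2z_1=0$ and $\mathcal L_1(z_1)=-\tfrac{\nu_2}{\nu_1}b_1$, together with $z_2\in\{\tfrac{\tau_2}{2}\pm\tfrac14\}$, $z_3\in\{\tfrac{\tau_3}{2}\pm\tfrac14\}$; the remaining possibility, all $z_i$ in $E_i[2]$, is excluded because such a point lies on $\hat X_\nu$ only when $\nu\in B_0$, and $\nu\notin B_0$ for a GBT surface. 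In each of the two surviving families the $2$-torsion condition on $z_1$ forces $-\tfrac{\nu_1}{\nu_2}b_1$ (resp.\ $-\tfrac{\nu_2}{\nu_1}b_1$) to lie in $\{1,-1,a_1,-a_1\}$, which --- using $a_1=b_1^2$ --- is in both cases equivalent to $\nu\in B=\{(\pm b_1:1),(1:\pm b_1)\}$; and when $\nu\in B$ each family contains exactly one choice of $z_1$ and $2\cdot2$ choices of $(z_2,z_3)$, hence $4$ points. These two families are disjoint (one has $z_2,z_3$ among the zeros of the $\mathcal L_i$, the other among the poles), they are exactly the two sets in part 1), and together they give $8$ nodes; for $\nu\notin B$ both are empty, so $\hat X_\nu$ is smooth.

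Combining the three cases, $\hat X$ is either smooth or has exactly $8$ singular points, which are nodes because a Burniat hypersurface has at most isolated nodes (or, directly, one checks that the Hessian of $F$ at each of the listed points is non-degenerate). I expect two parts of this to take real care. The first is the polar locus: the second family in part 1) sits entirely over $\{\mathcal L_2=\mathcal L_3=\infty\}$ and is simply not seen by the affine equation, so the substitution $w_i=1/\mathcal L_i$ and the identity $\mathcal L_i(z_i+\tfrac{\tau_i}{2})=a_i/\mathcal L_i(z_i)$ must be handled carefully, and the ``mixed'' charts (one $\mathcal L_i$ vanishing, another with a pole) must be shown to contribute nothing. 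The second is the sharp count $8$: a priori a special choice of the three elliptic curves might create further singular points, and excluding this is precisely where one uses that the four branch points of each $\mathcal L_i$ are distinct, i.e.\ $a_i\notin\{0,1,-1\}$.
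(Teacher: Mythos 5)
Your proof follows essentially the same route as the paper's: differentiate the explicit defining equation in the affine and polar charts, use the properties of the Legendre functions to reduce everything to $2$-torsion conditions, exclude the locus $F_0=\{2z_1=2z_2=2z_3=0\}$ via the freeness of the $G$-action (equivalently $g_0\in G$, resp.\ $\nu\notin B_0$, $\mu\notin B'$), exclude the intermediate configurations for $\hat X_\nu$ via the irreducibility condition $\nu_1/\nu_2\neq\pm1$, and then count. The one step to tighten is the count of the $8$ nodes of $\hat X_b$: the assertion that $a_1,a_2,a_3\notin\{0,\pm1\}$ alone forces $J\in\{I,\,I^c\}$ whenever $\prod_{i\in J}a_i=\pm\prod_{i\in I}a_i$ is false as literally stated (take $a_1=a_2$, $I=\{1\}$, $J=\{2\}$). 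You also need the relation $a_1a_2a_3=b^2=\bigl(\prod_{i\in I}a_i\bigr)^2$, i.e.\ $\prod_{i\in I}a_i=\prod_{i\in I^c}a_i$ --- which is what the paper extracts from the invariance of $\hat X_b$ under $(*,1,*,*,1,*,*,1,*)$, and which you already use implicitly to admit $J=I^c$ as a solution. With that relation every $J\neq I, I^c$ reduces to $a_i=\pm1$ for some $i$ and is excluded, so your count $2\times 4=8$ goes through.
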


\begin{proof}
We start with cases 2) and 3-4). 
\noindent

Let $f(z_1,z_2,z_3) := \mathcal L_1(z_1)\mathcal L_2(z_2) \mathcal L_3(z_3)$.  
It is easy to see that 
$$\Sing(\hat X_{\mu})=\{z\in \hat X_{\mu} \mid \nabla f(z)=0\} =
		 \{(z_1,z_2,z_3) \in \hat  X_{\mu} \mid 2z_1=2z_2=2z_3=0\} $$
since $\dfrac{d f}{d z_i}= \dfrac{d\mathcal L_i}{d z_i} \mathcal L_{i+1} \mathcal L_{i+2}$
 (the indices $i \in \{1,2,3\}$ have to be considered mod 3).
 We observe that  $\Sing(\hat X_{\mu})=\hat X_{\mu}\cap F_0$, being $ F_0$ the fixed locus 
of $g_0:=(1,0,0,1,0,0,1,0,0)$.

Since  $g_0 \in G_2$,  we get that the surfaces of case 2) are smooth. 

In case 3-4), $\mu:=b=b_1b_2b_3$ and  $\hat{X}_b$ is singular if and only if $b \in B'=\{f(z)\mid z \in F_0\}$; 
we have to determine the number of nodes.

\noindent We note that   if $(z_1,z_2,z_3)\in F_0$ then
$f(z_1,z_2,z_3)=f(z_1+\frac12,z_2+\frac12,z_3)=f(z_1,z_2+\frac12,z_3+\frac12)=f(z_1+\frac12,z_2,z_3+\frac12)$,
and all these points are in $F_0$.
Since $a_j=b_j^2\neq \pm 1$ and $\hat X_b$ is invariant under $(*,1,*,*,1,*,*,1,*)$,
it is easy to show that $b=\pm 1$ iff $b= \pm a_1a_2a_3 $ and 
that $b = \pm a_i $ iff $b= \pm a_{i+1}a_{i+2} $ (indices have to be considered mod 3);
in particular, $|B'|=8$.
\noindent  Since $|F_0|=64$, it follows that for any choice of $b \in B'$ exactly 8 points of $F_0$ belong to $\hat X_b$, 
i.e. $\Sing(\hat X_b)$ consist of exactly 8 nodes. 

\

In case 1), let $f_\nu(z_1,z_2,z_3):=\nu_1L_1(z_1,z_2,z_3) + \nu_2L_2(z_1,z_2,z_3)$.
We note that if $\nu_1=0$ or $\nu_2=0$, arguing as above  we  get that
$\hat X_\nu$ is smooth ($g_0\in G_1$), so we may assume $\nu_2 =1$ and 
$\nu:=\nu_1\in \mathbb C\setminus \{0\}$.
We recall that  $\mathcal{L}_i$ has poles in $z_i=\frac{\tau_i}{2}\pm \frac14$ and zeroes in $z_i=\pm \frac14$.

\noindent We start considering charts such that $z_i\neq\frac{\tau_i}{2}\pm \frac14$ for $i=1,2,3$.
It is easy to see that 
\begin{equation}\label{syst}
\nabla f_\nu=0 \Longleftrightarrow
\left\{\begin{array}{c}
\mathcal L'_1(\nu\mathcal L_2\mathcal L_3+b_2b_3)=0\\
\mathcal L'_2\mathcal L_3(\nu\mathcal L_1+b_1)=0\\
\mathcal L_2\mathcal L'_3(\nu\mathcal L_1+b_1)=0
\end{array}\right.
\mbox{ with }\quad \mathcal L'_i =\dfrac{d\mathcal L_i}{d z_i}\,.
\end{equation}
If $\nu\mathcal L_1+b_1=0$ for a point in $\hat X$, then 
$$f_\nu = \mathcal L_2\mathcal L_3 (\nu\mathcal L_1+b_1)+
b_2b_3(\nu b_1 +\mathcal L_1)=\nu b_1 +\mathcal L_1=0$$
 hence $\nu=\pm 1$, i.e. $\hat X$ is not irreducible,
a contradiction (see Remark \ref{B-hyp});
analogously we can assume $\nu\mathcal L_2\mathcal L_3+b_2b_3\neq 0$.

\noindent Since $\mathcal L_i$ and $\mathcal L'_i$ have no common zeroes, 
$\mathcal L'_2=0$ if and only if $\mathcal L'_3=0$; in this case the solutions of
$\nabla f_\nu=0$ are
points in $F_0$, the fixed locus of $g_0\in G_1$.
Therefore $(z_1,z_2,z_3)\in \Sing(\hat X)$  if and only if it satisfies the following equations:
 $\mathcal L_2(z_2)=\mathcal L_3(z_3)=0$, $\mathcal L'_1(z_1)=0$ and    $f_\nu=\nu b_1+\mathcal L_1(z_1)=0$.
 It is immediate to see that the last two equations have common solutions 
if and only if  $\nu\in B=\{\pm b_1, \pm b_1^{-1}\}$;
if $\nu \in B$, we find 4 nodes:
$z_1=\mathcal{L}_1^{-1}(-\nu b_1)$, $z_2\in \{\pm \frac{1}{4}\}$ and $z_3\in \{\pm \frac{1}{4}\}$.

We now consider charts such that  $z_2\neq\pm \frac14$ and $z_3\neq\pm \frac14$
 then the affine equation $f_\nu=0$ can be written as follows:
$$f_\nu=\overline{\mathcal L}_2 \overline{\mathcal L}_3 b_2b_3(\nu b_1+\mathcal L_1)+ (\nu \mathcal L_1+b_1)$$
being $\overline{\mathcal L}_i:=\mathcal L_i^{-1}$, $i=2,3$.
 Arguing as above, one gets that 
 $(z_1,z_2,z_3)\in \Sing(\hat X)$ if and only if it satisfies the following equations:
 $\overline{\mathcal L}_2(z_2)=\overline{\mathcal L}_3(z_3)=0$, $\mathcal L'_1(z_1)=0$ and 
 $f_\nu=\nu \mathcal L_1(z_1) +b_1=0$.
 The last two equations have common solutions if and only if
$\nu\in B=\{\pm b_1, \pm b_1^{-1}\}$;
if $\nu \in B$, we find  other 4 nodes, namely:
 $z_1=\mathcal{L}_1^{-1}(- \frac{b_1}{\nu})$, $z_2\in \{\frac{\tau_2}{2}\pm \frac{1}{4}\}$ and
 $z_3\in \{\frac{\tau_3}{2}\pm \frac{1}{4}\}$.

Considering the other  charts, one finds either no singular points, 
or four of the eight nodes we  found. 

\end{proof}

\begin{corollary}
Let $S\rightarrow \hat S:= \hat X/G$ be a generalized Burniat surface.
Then either $\hat S$ is smooth or its singular locus is given by exactly one node. 
\end{corollary}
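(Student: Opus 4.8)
The plan is to combine Proposition \ref{NumberOfSing} with the fact that, by definition, $G\cong(\mathbb Z/2\mathbb Z)^3$ acts freely on $\hat X$. First I recall, from the remark immediately after the definition of a GBT surface, that since $G$ acts freely on $\hat X$ and $\hat X$ has at most nodes as singularities, the quotient $\hat S=\hat X/G$ is singular if and only if $\hat X$ is singular, and in that case $\hat S$ again has at most nodes as singularities. So if $\hat X$ is smooth there is nothing to prove and $\hat S$ is smooth.

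Assume then that $\hat X$ is singular. By Proposition \ref{NumberOfSing} its singular locus $\Sing(\hat X)$ consists of exactly $8$ nodes. The group $G$ acts on $\hat X$ by biregular automorphisms, hence it maps singular points to singular points and therefore acts on the finite set $\Sing(\hat X)$ of $8$ points. Since the $G$-action on $\hat X$ is free, its restriction to $\Sing(\hat X)$ is free as well, so every $G$-orbit in $\Sing(\hat X)$ has exactly $|G|=8$ elements. As $|\Sing(\hat X)|=8$, the set $\Sing(\hat X)$ is a single $G$-orbit.

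Let $\pi_G\colon\hat X\to\hat S$ be the quotient map. Since $G$ acts freely, $\pi_G$ is \'etale, hence an analytic isomorphism near each node of $\hat X$. Therefore the $8$ nodes of $\hat X$ (forming one $G$-orbit) all map to a single point $P\in\hat S$, which is again a node, while $\hat S$ is smooth away from $P$. Hence $\Sing(\hat S)=\{P\}$ is a single node, as claimed.

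The argument is short once Proposition \ref{NumberOfSing} is in hand; the two steps worth spelling out are the orbit count (freeness of the $G$-action forces the $8$ nodes to form exactly one orbit) and the local identification of $\hat S$ at $P$ with a node (immediate from the \'etaleness of $\pi_G$ along $\Sing(\hat X)$). I do not expect a real obstacle here: the substance is entirely contained in Proposition \ref{NumberOfSing}, whose proof already supplies the count of exactly $8$ nodes in each singular case.
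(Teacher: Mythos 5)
Your proof is correct and is essentially the paper's own argument, just spelled out in more detail: the paper likewise notes that the free action of $G\cong(\mathbb Z/2\mathbb Z)^3$ on $\hat X$ forces the $8$ nodes to form a single orbit, so they map to exactly one node of $\hat S$. The extra steps you make explicit (the orbit count and the \'etale local identification) are exactly the ones the paper leaves implicit.
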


\begin{proof}
We simply note that  $G\cong (\mathbb Z/ 2\mathbb Z)^3$ acts freely on $\hat X$;
in particular it acts transitively on the set of nodes.
\end{proof}

\section{the main result}\label{main}

In this section we give a prove (using Corollary \ref{corBl} and Proposition \ref{NotGT}) that Bloch's conjecture holds for 
regular  generalized Burniat type surfaces. 

\begin{remark} Let $S\rightarrow \hat S:=\hat X/G$ be a GBT and let $\gamma \colon \hat X \rightarrow \hat S:=\hat X/G$ be the projection onto the quotient.
Let $\sigma$ be an involution on $\hat X$, 
it defines an involution $\overline{\sigma} $ on 
$\hat S$: $\overline\sigma(\gamma(x)):=\gamma(\sigma(x))$
and $$Fix(\overline{\sigma} )= \bigcup_{g \in G} \gamma(Fix_{\hat X}( \sigma g))\,,$$
being $Fix_{\hat X}( \sigma ):=Fix( \sigma )\cap\hat X$.
Moreover,  $\overline{\sigma} $ lifts to an involution $\sigma':= \epsilon^{-1} \circ\overline{\sigma} \circ \epsilon$ on $S$.
\end{remark}

Generalized Burniat type surfaces  are constructed considering  
$G\cong  (\mathbb Z/ 2\mathbb Z)^3 \triangleleft \mathcal G_0$ acting freely on a Burniat hypersurface
$\hat X \subset T$, hence it is natural to consider involutions in  $\mathcal G_0 \setminus G$.
We start determining the fixed locus of elements in $\mathcal G_0$.

\begin{lemma}\label{FixLoc1}
Let $S\rightarrow \hat S= \hat X /G$ be a regular generalized Burniat type surface with
$(\hat X,G)=(\hat X_\nu, G_1)$.
Let $g \in \mathcal G'_1$ be an element fixing point on $X_\nu$, 
then its fixed locus $Fix(g)$ on $\hat X_\nu$  is as in Table \ref{EqFixLoc1}.
\begin{table}[!ht]
    \centering
\begin{tabular}{c|c|c}
&$Fix(g_i)$, $X_\nu$ smooth &$Fix(g_i)$, $X_\nu$ singular\\
\hline
$g_1:=(0,0,0,0,0,0,1,0,0)$&&\multirow{2}{*}{4 genus 5 curves} \\
$g_2:=(0,0,0,1,0,0,0,0,0)$&&\\
\cline{3-3}
$g_3:=(1,0,0,0,0,0,0,0,0)$&\multirow{-3}{*}{4 genus 5 curves} &{$\Gamma$}\\ 
\hline
$g_4:=(0,0,0,1,0,0,1,0,0)$&\multirow{3}{*}{32 pt} &\multirow{3}{*}{32 pt}  \\
$g_5:=(1,0,0,0,0,0,1,0,0)$&&\\
$g_6:=(1,0,0,1,0,0,0,0,0)$&&\\
\hline
$g_7:=(0,0,0,0,0,1,0,0,1)$&\multirow{1}{*}{16 pt, 8 ell. curves} &\multirow{1}{*}{8 nodes, 8 ell. curves}  \\
\hline
$g_{11}:=(1,0,0,0,0,1,0,0,1)$&\multirow{3}{*}{32  pt}&32 pt, 8 nodes\\
\cline{3-3}
$g_{14}:=(0,1,0,0,1,0,0,1,0)$&  &\multirow{2}{*}{32  pt}\\
$g_{15}:=(0,1,0,1,1,1,1,1,1)$& &\\
\end{tabular}
\caption{}\label{EqFixLoc1}
\end{table}

 \noindent  In Table \ref{EqFixLoc1}, $\Gamma$ denotes
 the disjoint union $\Gamma:=C_1 \sqcup C_2 \sqcup D$,
being $C_i$ ($i=1,2$) a genus 5 curve and $D$ the union of  8 elliptic curves each one passing through exactly 
two nodes and   such that a point belongs to two of them if and only if it is a node.

\end{lemma}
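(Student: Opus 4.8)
The plan is to determine, for each of the ten elements $g$ occurring in Table~\ref{EqFixLoc1}, its fixed locus $Fix(g)\subset T=E_1\times E_2\times E_3$ and then the trace $Fix_{\hat X_\nu}(g)=Fix(g)\cap\hat X_\nu$, reading off the geometry from the equation $f_\nu=\mathcal L_2\mathcal L_3(\nu\mathcal L_1+b_1)+b_2b_3(\nu b_1+\mathcal L_1)=0$ obtained by expanding the defining equation of Remark~\ref{eqGBT}(a) as in the proof of Proposition~\ref{NumberOfSing} (normalised so that $\nu_2=1$, $\nu:=\nu_1$). The starting observation is that $\mathcal G'_1$ acts on $T$ coordinatewise, and on a factor $E_j$ the induced map is: the identity, if $(\zeta_j,\eta_j,\epsilon_j)=0$; an involution $z_j\mapsto-z_j+c_j$ with $2c_j=0$ — hence with exactly four fixed points, at each of which $\mathcal L_j$ takes a value fixed by the functional equations recalled in Section~\ref{GBTS} — if $\zeta_j+\eta_j+\epsilon_j$ is odd; a fixed-point-free translation, if $\zeta_j+\eta_j+\epsilon_j$ is even and nonzero. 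For the ten elements in the table only the first two cases occur on each $E_j$ (the remaining fixed-point-carrying element $g_{10}$ of $\mathcal G'_1$ meets $\hat X_\nu$ only for special parameters and is excluded), so $Fix(g)=F_1\times F_2\times F_3$ with each $F_j$ equal to $E_j$ or to a set of four points, and $Fix_{\hat X_\nu}(g)$ can be read off factor by factor. One uses throughout Proposition~\ref{NumberOfSing}: $\hat X_\nu$ is singular exactly for $\nu\in B=\{\pm b_1,\pm b_1^{-1}\}$, with eight nodes whose $E_1$-coordinates are $2$-torsion points and whose $E_2$- and $E_3$-coordinates are the zeros $\pm\tfrac14$ or the poles $\tfrac{\tau_j}2\pm\tfrac14$ of $\mathcal L_j$.

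For $g_1,g_2,g_3$ two of the $F_j$ are whole factors, so $Fix_{\hat X_\nu}(g)$ is a union of four slices $\{z_i=z_i^{(0)}\}\cap\hat X_\nu$, each a divisor of bidegree $(2,2)$ on an abelian surface and hence, when smooth, a curve of genus $5$ by adjunction. A slice degenerates exactly when a coefficient of the restricted equation vanishes, and it then becomes $\{\mathcal L_j\mathcal L_k=0\}$ or (in the chart with $\mathcal L_j,\mathcal L_k$ replaced by $\mathcal L_j^{-1},\mathcal L_k^{-1}$) $\{\mathcal L_j^{-1}\mathcal L_k^{-1}=0\}$: a union of two ``vertical'' and two ``horizontal'' elliptic curves, meeting pairwise in four points which by Proposition~\ref{NumberOfSing} are nodes of $\hat X_\nu$. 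For $g_1$ and $g_2$ the slices run over the $2$-torsion of $E_3$, respectively $E_2$, which never meets a node-coordinate, so no slice degenerates: four genus-$5$ curves in both columns of the table. For $g_3$ the slices run over $E_1[2]$, which does contain the $E_1$-coordinates of the nodes; a slice degenerates exactly when $\mathcal L_1(z_1^{(0)})\in\{-\nu b_1,\,-b_1/\nu\}$, and this occurs for two of the four values precisely when $\nu\in B$. Those two degenerate slices supply the $2+2+2+2=8$ elliptic curves forming $D$, with the stated incidence pattern (two of them meet if and only if at a node, each passes through exactly two nodes — both read off the four lines of each slice), while the other two slices are $C_1,C_2$; this is the entry $\Gamma$.

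For $g_4,g_5,g_6,g_7$ exactly one $F_j$, say $E_\ell$, is a whole factor, and $Fix_{\hat X_\nu}(g)$ is the union over the $16$ points $(z_j^{(0)},z_k^{(0)})\in F_j\times F_k$ of the fibres $E_\ell\times\{(z_j^{(0)},z_k^{(0)})\}\cap\hat X_\nu$. Restricting $f_\nu$ — and passing to the reciprocal chart where $\mathcal L_j$ or $\mathcal L_k$ has a pole — gives a function on $E_\ell$ of degree at most two in $\mathcal L_\ell$, which is either identically zero (so the whole fibre, an elliptic curve isomorphic to $E_\ell$, lies in $\hat X_\nu$) or has two zeros, these coalescing into one exactly when the value in question is a branch point of $\mathcal L_\ell$. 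For $g_4,g_5,g_6$ the $F_j$ consist of $2$-torsion points, the identically-zero alternative never occurs, and one finds $32$ isolated fixed points, disjoint from the nodes, in both columns. For $g_7$ the $F_j$ solve $2z_j=\tfrac12$, where $\mathcal L_j\in\{0,\infty\}$; the $4+4$ pairs with $(\mathcal L_j,\mathcal L_k)\in\{(0,\infty),(\infty,0)\}$ make $f_\nu$ vanish identically on the fibre, giving $8$ elliptic curves, while the $4+4$ pairs with $(\mathcal L_j,\mathcal L_k)\in\{(0,0),(\infty,\infty)\}$ give two fixed points each when $\hat X_\nu$ is smooth ($16$ points) and a single fixed point forced to a branch value of $\mathcal L_\ell$, i.e.\ a node, when $\nu\in B$ ($8$ nodes). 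Finally, for $g_{11},g_{14},g_{15}$ all three $F_j$ are four-point sets, $Fix(g)$ consists of $64$ points, and substituting into $f_\nu$ — using $\mathcal L_j(\tfrac{\tau_j}4)=b_j$, $b_j^2=a_j$, together with the analogous identity $\mathcal L_j(\tfrac{\tau_j+1}4)^2=\mathcal L_j(\tfrac{\tau_j+1}2)=-a_j$, all immediate from the functional equations of $\mathcal L_j$ — shows that exactly $32$ of them lie on $\hat X_\nu$; for $g_{11}$ the $8$ nodes are in addition fixed when $\nu\in B$, whereas the fixed points of $g_{14},g_{15}$ avoid $E_1[2]$ and hence all nodes.

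The computation is long but essentially mechanical once, for each $g$, the induced involutions and the values of $\mathcal L_j$ at their fixed points have been written down. I expect two places to demand care. The first is keeping track of the poles of $\mathcal L_j$ by systematically passing to the charts where $\mathcal L_j$ is replaced by $\mathcal L_j^{-1}$: it is precisely ``at infinity'' that the eight elliptic curves of $g_7$ — and the ``$(\infty,\infty)$'' and mixed contributions of the other elements — occur, and omitting these charts would undercount the fixed locus. The second, and the only genuinely non-formal point, is matching, in the singular case, the degenerate slices of $g_3$ and the extra fixed points of $g_7$ and $g_{11}$ with the eight nodes of Proposition~\ref{NumberOfSing}; this rests on the coincidence that $\nu\in\{\pm b_1,\pm b_1^{-1}\}$, the condition for $\hat X_\nu$ to be singular, is exactly the condition making $-\nu b_1$ and $-b_1/\nu$ equal to the branch values $\mp a_1,\mp1$ of $\mathcal L_1$ — which is what links the fixed-locus computation to the node count of Proposition~\ref{NumberOfSing}.
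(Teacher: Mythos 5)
Your proposal is correct and follows essentially the same route as the paper's proof: compute $Fix(g)$ on $T$ factor by factor, intersect with the multidegree $(2,2,2)$ equation chart by chart (including the charts at the poles of the $\mathcal L_j$), get the genus-$5$ curves from the $(2,2)$ slices, and match the degenerations for $g_3$, $g_7$, $g_{11}$ with the eight nodes of Proposition \ref{NumberOfSing} via the coincidence $\nu\in B \Leftrightarrow -\nu b_1,\,-b_1/\nu$ are branch values of $\mathcal L_1$. The only cosmetic difference is that the paper additionally invokes $Fix(g)\cap Fix(g_{11})=\emptyset$ (and freeness of $g_0\in G_1$) as a shortcut to see that certain fixed loci are independent of $\nu$, where you argue directly in each case.
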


\begin{proof} By Proposition \ref{NumberOfSing}, $\hat X_\nu$ is singular if and only if 
 $\nu\in B:=\{(\pm b_1:1), (1:\pm b_1) \}$;
in this case the eight nodes on $\hat X_\nu$ are fixed by $g_{11}:=(1,0,0,0,0,1,0,0,1)$.

\begin{itemize}[leftmargin=.5cm]
\item[\underline{$g_1$}:]  since $Fix(g_1)\cap Fix (g_{11})= \emptyset$, the fixed locus of $g_1$ is independent from $\nu$. 
 It fixes the points $(z_1, z_2,  \overline{z_3})$ with $2\overline{z_3}=0$:
$ \overline{z_3}\in\big\{0,\frac{1}{2}, \frac{\tau_3}{2},\frac{\tau_3+1}{2}\big\}$, hence it cuts on $\hat X_\nu$ four disjoint curves
of genus 5: each one is given by an equation of multidegree (2,2) in $E_1\times E_2$.

\item[\underline{$g_2$}:] this case is analogous to the previous one.

\item[\underline{$g_3$}:] 
 it fixes the points  with $2z_1=0$ that is
$ z_1\in V:=\big\{0,\frac{1}{2}, \frac{\tau_1}{2},\frac{\tau_1+1}{2}\big\}$.

\noindent If $\hat X_\nu$ is smooth, this case is analogous to $g_1$:
we get  four disjoint smooth curves of genus 5 on $\hat X_\nu$.

\noindent If $\hat X_\nu$ is singular ($\nu\in B$), we rewrite the equation of $\hat X_\nu$ as follows:
$$\mathcal L_2(z_2) \mathcal L_3(z_3) (\nu \mathcal L_1(z_1)+b_1)+ b_2b_3(\nu b_1+ \mathcal L_1(z_1))=0\,.$$
For a fixed value $\nu \in  B $, there exists a unique $\overline{z_1}\in V$ such that $\nu b_1+\mathcal L_1(\overline{z_1})=0$, and the equation of $\hat X_\nu$ is satisfied if and only if $\mathcal L_2(z_2) =0$
 or $\mathcal L_3(z_3) =0$.
We get  four elliptic curves on $\hat X_\nu$ fixed by $g_{3}$:  $(\overline{z_1}, \pm\frac14, z_3)$, $(\overline{z_1}, z_2, \pm\frac14)$.
Analogously, considering  the element $z_1':=\overline{z_1}+\frac{\tau_1}2 \in V$   we get other four elliptic curves 
 $(z_1', \frac{\tau_1}2\pm\frac14, z_3)$ and 
 $(z_1', z_2, \frac{\tau_1}2\pm\frac14)$ on $\hat X_\nu$ fixed by $g_3$.
 We observe that that a point belongs to two of these eight  curves if and only if it is a node.\\
Considering $z_1\in V\setminus\{\overline{z_1},z_1' \}$, we get two disjoint curves of 
genus 5: both given by an equation of multidegree (2,2) in $E_2\times E_3$.

\item[\underline{$g_4$}:] 
 since $Fix(g_4)\cap Fix (g_{11})= \emptyset$, the fixed locus of $g_4$ is independent from $\nu$.
It fixes the points $(z_1,\overline{z_2}, \overline{z_3})$  with $2 \overline{z_2}=2 \overline{z_3}= 0$ and $2z_1\neq 0$,
since  $g_0:=(1,0,0,1,0,0,1,0,0) \in G_1$ has no fixed points:  for any pair $( \overline{z_2}, \overline{z_3})$, the equation
defining $\hat X_\nu$ has two distinct solutions,  whence $g_4$ fixes 32 points on $\hat X_\nu$.

\item[\underline{$g_5$-$g_6$}:] these cases are analogous to $g_4$.

\item[\underline{$g_7$}:] 

The involution $g_7$ fixes the points $(z_1,\overline{z_2}, \overline{z_3})$  with $2 \overline{z_2}=\frac12$, $2\overline{z_3}= \frac12$.
Let $(\mathcal{L}_i(z_i)_0:\mathcal{L}_i(z_i)_1) $ be the homogeneous coordinates of the point $\mathcal{L}_i(z_i)$. 
The equation of $\hat X_\nu$ is then 
$$\qquad \quad\begin{array}{c}
\nu[\mathcal L_1(z_1)_0\mathcal L_2(z_2)_0\mathcal L_3(z_3)_0 + 
 b_1b_2b_3\mathcal L_1(z_1)_1\mathcal L_2(z_2)_1\mathcal L_3(z_3)_1]+ \\[6pt]
[b_2b_3\mathcal L_1(z_1)_0\mathcal L_2(z_2)_1\mathcal L_3(z_3)_1 +
  b_1\mathcal L_2(z_2)_0\mathcal L_3(z_3)_0\mathcal L_1(z_1)_1]=0 \,.\end{array}$$
It follows easily from the properties of the  Legendre $\mathcal L$-function that 
$ \left(\mathcal L_i\left(\frac{1}{4}\right)_0: \mathcal L_i\left(\frac{1}{4}\right)_1\right)=(0:1)\,, 
\left(\mathcal L_i\left(\frac{1}{4}+\frac{\tau_i}{2}\right)_0: \mathcal L_i\left(\frac{1}{4}+\frac{\tau_i}{2}\right)_1\right)=(1:0)\,.
$
If $z_2=\pm \frac14$ and $z_3=\pm \frac14+\frac{\tau_3}2$ or
 $z_2=\pm \frac14+\frac{\tau_2}2$ and $z_3=\pm \frac14$, then the equation is satisfied for 
any $z_1 \in E_1$, i.e $g_7$ fixes 8 disjoint elliptic curves contained in the smooth locus of $\hat X_\nu$. 

\noindent If $z_2=\pm \frac14$ and $z_3=\pm \frac14$ then the equation becomes
$\nu b_1\mathcal{L}_1(z_1)_1+\mathcal{L}_1(z_1)_0=0$ that has two solutions if 
$\nu  \notin B$ (i.e. $\hat X_\nu$ is smooth) and one solution if $\nu  \in B$;
in other words,  if $\hat X_\nu$ is smooth $g_4$ fixes 8 isolated points, else $g_7$ fixes 4 nodes.
Analogously,  if $z_2=\pm \frac14+\frac{\tau_2}2$ and $z_3=\pm \frac14+\frac{\tau_3}2$ and
 if $\hat X_\nu$ is smooth $g_7$ fixes  other 8 isolated points, else it fixes  the other 4 nodes.

\item[\underline{$g_{11}$}:]  
we observe that  $Fix(g_{11})= Fix(g_7) \cap \{2z_1=0\}$ and arguing as above we distinguish three cases:
if $z_2=\pm \frac14$ and $z_3=\pm \frac14+\frac{\tau_3}2$ or
 $z_2=\pm \frac14+\frac{\tau_2}2$ and $z_3=\pm \frac14$, then the equation of $\hat X_\nu$ is satisfied for 
any  $z_1 \in E_1$, but  $2z_1=0$ hence $g_{11}$ fixes 32 smooth points on $\hat X_\nu$.

\noindent If $z_2=\pm \frac14$ and $z_3=\pm \frac14$ then the equation of $\hat X_\nu$ is
$\nu b_1\mathcal{L}_1(z_1)_1+\mathcal{L}_1(z_1)_0=0$, since $2z_1=0$ we get no solution if 
$\hat X_\nu $ is smooth $(\nu  \notin B)$ and one solution  (a node) if $\hat X_\nu$ is singular ($\nu  \in B$).

\noindent An analogous argument holds if $z_2=\pm \frac14+\frac{\tau_2}2$ and $z_3=\pm \frac14+\frac{\tau_3}2$.

\noindent 
Therefore,  if $\hat X_\nu$ is smooth $g_{11}$ fixes 32 isolated points, else $g_{11}$ fixes 32 smooth isolated points and  8 nodes.  
 
 \item[\underline{$g_{14}$}:]  
 since $Fix(g_{14})\cap Fix (g_{11})= \emptyset$,
the fixed locus of $g_{14}$ is independent from $\nu$.
It fixes 64 points on $E_1\times E_2\times E_3$, namely
$$z\in \left\{ \frac{1}{4} \left( \begin{array}{c}
\pm \tau_1\\
\pm\tau_2\\
\pm\tau_3
\end{array}
\right)
+ \frac{1}{2}(\mathbb Z/ 2\mathbb Z)^3
\right\}\,.$$
Observe that  $\mathcal L_k(\pm \frac{\tau_k}{4})=b_k$ and 
$\mathcal L_k(\pm\frac{\tau_k}{4}+\frac{1}{2})=-b_k$.
It is a straightforward computation to show that exactly 32 of them lie on $\hat X_\nu$.

\item[\underline{$g_{15}$}:]  
  since $Fix(g_{15})\cap Fix (g_{11})= \emptyset$,
the fixed locus of $g_{15} $ is independent from $\nu$.
It fixes 64 points on $E_1\times E_2\times E_3$, namely
$$z\in \left\{ \frac{1}{4} \left( \begin{array}{c}
\pm \tau_1\\
\pm(1+\tau_2)\\
\pm(1+\tau_3)
\end{array}
\right)
+ \frac{1}{2}(\mathbb Z/ 2\mathbb Z)^3
\right\}\,.$$
Observe now that 
$$\mathcal L_k\left(\frac{1}{4}+ \frac{\tau_k}{4}\right)^2=\mathcal L_k\left(\frac{1}{4}+ \frac{\tau_k}{4}+\frac{1}{2}\right)^2=-a_k\,,$$
whence $\{\mathcal L_k\left(\frac{1}{4}+ \frac{\tau_k}{4}\right),\mathcal L_ik\left(\frac{1}{4}+ \frac{\tau_k}{4}+\frac{1}{2}\right)\}=
\{\sqrt{-1}b_k, -\sqrt{-1}b_k\}$.  
It is a straightforward computation to show that exactly 32 of them lie on $\hat X_\nu$.
\end{itemize}
\end{proof}

\begin{lemma}\label{FixLoc2}
Let $S\rightarrow \hat S= \hat X /G$ be a regular generalized Burniat type surface with
$(\hat X,G)=(\hat X_\mu, G_2)$.
Let $g \in \mathcal G_1$ be an element fixing point on $X_\mu$, 
then its fixed locus $Fix(g)$ on $\hat X_\mu$  is as in Table \ref{EqFixLoc2}.

\begin{table}[!ht]
    \centering

\begin{tabular}{c|c}
&$Fix(g_i)$, $X_\mu$\\
\hline
$g_1:=(0,0,0,0,0,0,1,0,0)$&\multirow{3}{*}{4 genus 5 curves} \\
$g_2:=(0,0,0,1,0,0,0,0,0)$&\\
$g_3:=(1,0,0,0,0,0,0,0,0)$&\\ 
\hline
$g_4:=(0,0,0,1,0,0,1,0,0)$&\multirow{3}{*}{32 pt} \\
$g_5:=(1,0,0,0,0,0,1,0,0)$&\\
$g_6:=(1,0,0,1,0,0,0,0,0)$&\\
\hline
$g_7:=(0,0,0,0,0,1,0,0,1)$&\multirow{3}{*}{16 pt, 8 ell. curves}   \\
$g_8:=(0,0,1,0,0,0,0,0,1)$&\\
$g_9:=(0,0,1,0,0,1,0,0,0)$&\\
\hline
$g_{11}:=(1,0,0,0,0,1,0,0,1)$&\multirow{3}{*}{32  pt}\\
$g_{12}:=(0,0,1,1,0,0,0,0,1)$&\\
$g_{13}:=(0,0,1,0,0,1,1,0,0)$&\\
\end{tabular}\caption{}\label{EqFixLoc2}
\end{table}
\end{lemma}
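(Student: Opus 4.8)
The plan is to treat separately each element $g$ occurring in Table~\ref{EqFixLoc2}, in the same spirit as the proof of Lemma~\ref{FixLoc1} but with one simplification: by Proposition~\ref{NumberOfSing} the hypersurface $\hat X_\mu$ is \emph{smooth}, so no ``singular'' case arises. Throughout I would use the properties of the Legendre functions recalled at the beginning of Section~\ref{GBTS}: $\mathcal L_j$ has simple zeroes exactly at $z_j=\pm\tfrac14$, simple poles exactly at $z_j=\tfrac{\tau_j}{2}\pm\tfrac14$, takes the four nonzero values $\{1,-1,a_j,-a_j\}$ on the $2$-torsion of $E_j$, and realises $E_j$ as the double cover of $\mathbb P^1$ branched exactly over $\{\pm1,\pm a_j\}$; together with the description \eqref{Z23-action} of the $\mathcal G$-action. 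I would also record once and for all that, since $S\to\hat S=\hat X_\mu/G_2$ is a GBT surface, $G_2$ acts freely on $\hat X_\mu$; in particular $g_0:=(1,0,0,1,0,0,1,0,0)\in G_2$ has no fixed point on $\hat X_\mu$, so by Proposition~\ref{NumberOfSing} and Remark~\ref{eqGBT} one has $\mu\notin B'=\{\pm1,\pm a_i,\pm a_ia_j,\pm a_1a_2a_3\}$.

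The first step is, for each $g$, to read off its fixed locus $Fix_T(g)$ on $T=E_1\times E_2\times E_3$ from \eqref{Z23-action}. On each factor $E_j$ every $g$ in the list acts either trivially, or by $z_j\mapsto -z_j$ (fixed locus the four points $\{2z_j=0\}$, on which $\mathcal L_j$ is nonzero and finite), or by $z_j\mapsto -z_j+\tfrac12$ (fixed locus the four points $\{2z_j=\tfrac12\}=\{\pm\tfrac14\}\cup\{\tfrac{\tau_j}{2}\pm\tfrac14\}$, two of which are zeroes and two poles of $\mathcal L_j$); so $Fix_T(g)$ is the corresponding product. The second step is to intersect $Fix_T(g)$ with $\hat X_\mu$ using the bihomogeneous form $\mathcal L_1(z_1)_0\mathcal L_2(z_2)_0\mathcal L_3(z_3)_0=\mu\,\mathcal L_1(z_1)_1\mathcal L_2(z_2)_1\mathcal L_3(z_3)_1$ of the defining equation, where $\mathcal L_j(z_j)=(\mathcal L_j(z_j)_0:\mathcal L_j(z_j)_1)$, so that zeroes $(0:1)$ and poles $(1:0)$ of the $\mathcal L_j$, and the ensuing $0\cdot\infty$ indeterminacies, are handled correctly.

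Carrying this out: for $g_1,g_2,g_3$ (nontrivial on one factor, via $z_j\mapsto -z_j$) the fixed locus is four copies of a product of two elliptic curves, on each of which $\mathcal L_j$ is a nonzero constant; hence $\hat X_\mu$ cuts out a curve of bidegree $(2,2)$ in the product of the other two factors, which is ample, and is smooth by a gradient computation using that $\mathcal L_i$ and $\mathcal L_i'$ have no common zero and that $\mu\notin B'$, so it is irreducible of genus $5$ by adjunction: the four disjoint genus $5$ curves. For $g_4,g_5,g_6$ (nontrivial on two factors, each via $z_j\mapsto -z_j$) the fixed locus is $16$ copies of an elliptic curve, on each of which the equation reads $\mathcal L_j(z_j)=c$ for a nonzero finite constant $c$ which, because $\mu\notin B'$, lies outside $\{\pm1,\pm a_j\}$; so each copy meets $\hat X_\mu$ in two points, $32$ in all. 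For $g_7,g_8,g_9$ (nontrivial on two factors, each via $z_j\mapsto -z_j+\tfrac12$) the fixed locus is again $16$ copies of an elliptic curve, now sorted by whether the two half-period translates hit zeroes or poles of the two relevant $\mathcal L$'s: the $4$ copies of type (zero, zero) force the free coordinate onto a pole of its $\mathcal L$ ($8$ points), the $4$ of type (pole, pole) force it onto a zero ($8$ points), and the $8$ of mixed type make the equation an identity ($8$ entire elliptic curves), for a total of $16$ points and $8$ elliptic curves. Finally, for $g_{11},g_{12},g_{13}$ (nontrivial on all three factors, two via $z_j\mapsto -z_j+\tfrac12$ and one via $z_j\mapsto -z_j$) the fixed locus is $64$ points; since the free coordinate now ranges over $2$-torsion, where $\mathcal L_j$ is nonzero and finite, types (zero, zero) and (pole, pole) of the two half-period coordinates give no solution, while (zero, pole) and (pole, zero) give $16$ points each, $32$ in all.

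The main obstacle I expect is not conceptual but bookkeeping: at each fixed point where some $\mathcal L_j$ vanishes or has a pole one must keep track of which homogeneous coordinate of $\mathcal L_j(z_j)$ is zero, and it is precisely this that separates the ``$16$ points, $8$ elliptic curves'' families $g_7$--$g_9$ (where an entire elliptic curve of free parameter survives) from the ``$32$ points'' families $g_{11}$--$g_{13}$ (where the analogous coordinate is pinned to $2$-torsion). One must also make sure that the uniform counts, and the smoothness and genus of the curves, genuinely use only $\mu\notin B'$, which holds because $G_2$ acts freely on $\hat X_\mu$.
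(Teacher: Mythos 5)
Your proposal is correct and follows essentially the same route as the paper: the paper's proof of this lemma simply observes that $g_0=(1,0,0,1,0,0,1,0,0)\in G_2$ (so $\hat X_\mu$ is smooth and $\mu\notin B'$) and refers back to the case-by-case analysis of Lemma \ref{FixLoc1}, which is exactly the factor-by-factor fixed-locus computation, homogeneous-coordinate bookkeeping at zeroes and poles of the $\mathcal L_j$, and use of $\mu\notin B'$ that you carry out explicitly. Your counts for each of $g_1$--$g_9$, $g_{11}$--$g_{13}$ agree with Table \ref{EqFixLoc2}.
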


\begin{proof} Noting that $g_0=(1,0,0,1,0,0,1,0,0)\in G_2$, the same arguments of the proof
of Lemma \ref{FixLoc1} hold, and the statement follows.

\end{proof}

Finally, we consider the case  $\hat X=\hat X_b$; we study the fixed locus only of the
 elements in $\mathcal G_0$ having fixed locus of dimension one on $T$, 
since it is enough for our purposes.

\begin{lemma}\label{FixLoc3}
Let $S\rightarrow \hat S= \hat X /G$ be a regular generalized Burniat type surface with
$(\hat X,G)=(\hat X_b, G_j)$, $j\in\{3,4\}$.
Let $g \in \mathcal G_0$ be an element having fixed locus of dimension one on $T$
then its fixed locus $Fix(g)$ on $\hat X_b$  is as in Table \ref{EqFixLoc3}.

\begin{table}[!ht]
    \centering

\begin{tabular}{c|c|c}
&$Fix(g_i)$, $X_b$ smooth &$Fix(g_i)$, $X_b$ singular\\
\hline
$g_4:=(0,0,0,1,0,0,1,0,0)$&\multirow{3}{*}{32 pt} &\multirow{3}{*}{16 pt, 8 nodes}  \\
$g_5:=(1,0,0,0,0,0,1,0,0)$&&\\
$g_6:=(1,0,0,1,0,0,0,0,0)$&&\\
\hline
$g_7:=(0,0,0,0,0,1,0,0,1)$&\multirow{3}{*}{16 pt, 8 ell. curves} &\multirow{3}{*}{8 nodes, 8 ell. curves}  \\
$g_8:=(0,0,1,0,0,0,0,0,1)$&\\
$g_9:=(0,0,1,0,0,1,0,0,0)$&\\
\end{tabular}\caption{}\label{EqFixLoc3}
\end{table}

\end{lemma}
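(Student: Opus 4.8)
The plan is to argue exactly as in the proofs of Lemmas~\ref{FixLoc1} and~\ref{FixLoc2}, the defining equation~(\ref{righteq3}) of $\hat X_b$ being the simplest among the three Burniat hypersurfaces. We keep the notation introduced above. By Proposition~\ref{NumberOfSing}, $\hat X_b$ is singular precisely when $b=b_1b_2b_3\in B'$, and in that case $\Sing(\hat X_b)=F_0\cap\hat X_b$ consists of exactly $8$ nodes, where $F_0=\{2z_1=2z_2=2z_3=0\}$ is the fixed locus on $T$ of $g_0:=(1,0,0,1,0,0,1,0,0)$; recall moreover that $g_0\notin G_j$ for $j=3,4$, since $G_j$ acts freely on $\hat X_b$. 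The first step is to compute, for each $g\in\{g_4,\dots,g_9\}$, the fixed locus $Fix_T(g)$ on $T$ from the formulas~(\ref{Z23-action}): each such $g$ acts trivially on one of the factors $E_k$ and as $z\mapsto-z$ (for $g_4,g_5,g_6$), respectively as $z\mapsto-z+\tfrac12$ (for $g_7,g_8,g_9$), on the other two. Hence $Fix_T(g)$ is a disjoint union of $16$ ``vertical'' elliptic curves $E_k\times\{p\}\times\{q\}$, where $(p,q)$ ranges over the pairs with $2p=2q=0$, respectively $2p=2q=\tfrac12$; intersecting these curves with $\hat X_b$ gives $Fix(g)$ on $\hat X_b$.

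For $g\in\{g_4,g_5,g_6\}$ we substitute the $2$-torsion values $\mathcal L_{k+1}(p)\in\{1,-1,a_{k+1},-a_{k+1}\}$ and $\mathcal L_{k+2}(q)\in\{1,-1,a_{k+2},-a_{k+2}\}$ (indices mod $3$) into $\mathcal L_1(z_1)\mathcal L_2(z_2)\mathcal L_3(z_3)=b_1b_2b_3$; then $E_k\times\{p\}\times\{q\}$ meets $\hat X_b$ where $\mathcal L_k(z_k)=c_{p,q}$ for an explicit nonzero constant $c_{p,q}$. Since $\mathcal L_k\colon E_k\to\mathbb P^1$ has degree $2$ with branch values $\{1,-1,a_k,-a_k\}$ and ramification points the four $2$-torsion points of $E_k$, this equation has two reduced solutions when $c_{p,q}$ is not a branch value, and exactly one — necessarily a $2$-torsion point, hence a point of $F_0$ — when it is. Using the relations $a_j=b_j^2$ one checks that $c_{p,q}$ is a branch value for some $(p,q)$ if and only if $b\in B'$, and that in that case this occurs for exactly $8$ of the $16$ pairs $(p,q)$; the corresponding $8$ intersection points are precisely the $8$ nodes of $\hat X_b$, which do lie in $Fix(g)$ because $g$ fixes every $2$-torsion point of the two factors on which it acts nontrivially. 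This yields the entries of Table~\ref{EqFixLoc3} for $g_4,g_5,g_6$: the $32$ smooth isolated fixed points when $\hat X_b$ is smooth, and $16$ smooth isolated fixed points together with the $8$ nodes when $\hat X_b$ is singular.

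For $g\in\{g_7,g_8,g_9\}$ the two transverse coordinates $p,q$ satisfy $2p=2q=\tfrac12$, and, as recalled in the proof of Lemma~\ref{FixLoc1}, the function $\mathcal L$ takes there the homogeneous values $(0:1)$ at $\pm\tfrac14$ and $(1:0)$ at $\pm\tfrac14+\tfrac{\tau}{2}$. Writing the equation of $\hat X_b$ homogeneously in the coordinates $(\mathcal L_i(z_i)_0:\mathcal L_i(z_i)_1)$, one finds that when the value of $\mathcal L_{k+1}$ at $p$ and that of $\mathcal L_{k+2}$ at $q$ are of opposite type (one equal to $(0:1)$, the other to $(1:0)$) the equation is satisfied identically, so that $E_k\times\{p\}\times\{q\}\subset\hat X_b$; this accounts for $8$ fixed elliptic curves. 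When they are of the same type, the equation reduces to a linear condition $\mathcal L_k(z_k)_0=0$ or $\mathcal L_k(z_k)_1=0$ on the remaining variable, whose solutions are isolated points of $\hat X_b$. Running through the $2$-torsion and homogeneity relations of the $\mathcal L_j$ exactly as in Lemma~\ref{FixLoc1}, and using once more that $\Sing(\hat X_b)=F_0\cap\hat X_b$, one counts these isolated points and determines which of them specialize to nodes in the singular case; this produces the entries of Table~\ref{EqFixLoc3} for $g_7,g_8,g_9$.

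The main obstacle is this bookkeeping in the singular case: one has to show that exactly the prescribed number of the finitely many intersection points of $\hat X_b$ with the vertical curves land on the eight nodes $F_0\cap\hat X_b$, the remaining ones staying reduced and contained in the smooth locus of $\hat X_b$. As in the proof of Lemma~\ref{FixLoc1}, this is handled by combining the relations $a_j=b_j^2$, the invariance of $\hat X_b$ under the translations $(\ast,1,\ast,\ast,1,\ast,\ast,1,\ast)$, and the $S_3$-symmetry permuting the three elliptic curves (which lets one treat $g_4,g_5,g_6$ at once, and likewise $g_7,g_8,g_9$). The remaining verifications are a routine adaptation of the two preceding lemmas.
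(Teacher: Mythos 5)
Your proposal follows essentially the same route as the paper: decompose $Fix_T(g)$ into the $16$ vertical elliptic curves $E_k\times\{p\}\times\{q\}$, intersect with $\hat X_b$ using the $2$-torsion values of the Legendre functions for $g_4,g_5,g_6$ and the homogeneous values $(0:1)$, $(1:0)$ for $g_7,g_8,g_9$, and invoke Proposition \ref{NumberOfSing} to control the singular case. The counts for $g_4,g_5,g_6$ ($32$ points when smooth; $16$ points plus the $8$ nodes of $F_0\cap\hat X_b$ when $b\in B'$) are obtained exactly as in the paper.

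One point deserves correction in your treatment of $g_7,g_8,g_9$. You propose to ``determine which of the isolated points specialize to nodes in the singular case,'' but none can: by Proposition \ref{NumberOfSing} one has $\Sing(\hat X_b)=F_0\cap\hat X_b$ with $F_0=\{2z_1=2z_2=2z_3=0\}$, whereas $Fix_T(g_7)$ lies in $\{2z_2=2z_3=\tfrac12\}$, so $Fix_T(g_j)\cap F_0=\emptyset$ for $j=7,8,9$. This is exactly the observation the paper makes (``$Fix(g_7)\cap Fix(g_0)=\emptyset$, so the fixed locus of $g_7$ is independent from $b$''), and it replaces your bookkeeping for these three elements entirely: the answer is $16$ smooth isolated points and $8$ elliptic curves whether or not $\hat X_b$ is singular. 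If you carry out your plan honestly you will find zero nodes in that row, which disagrees with the ``$8$ nodes, $8$ ell.\ curves'' entry of Table \ref{EqFixLoc3}; that entry is at odds with the paper's own proof (it appears to have been copied from the $\hat X_\nu$ situation of Lemma \ref{FixLoc1}, where the nodes really do lie over $z_2=\pm\tfrac14$, $z_3=\pm\tfrac14$), so the discrepancy is in the stated table rather than in your method. Apart from being alert to this, your argument is sound and the remaining verifications are the routine ones you describe.
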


\begin{proof} By Proposition \ref{NumberOfSing}, $\hat X_b$ is singular if and only if 
$b\in B'=\{\pm 1, \pm a_i, \pm a_i a_j, \pm a_1a_2a_3\} $,  with $ i\neq j\in\{1,2,3\}$;
in this case the eight nodes on $\hat X_b$ are fixed by $g_0=(1,0,0,1,0,0,1,0,0)$.

\begin{itemize}[leftmargin=.5cm]

\item[\underline{$g_4$}:]
it fixes the points $(z_1,\overline{z_2}, \overline{z_3})$  with 
$2 \overline{z_2}=2 \overline{z_3}= 0$.
If $b \notin B'$, for every pair $(\overline{z_2}, \overline{z_3})$,  $2z_1\neq 0$, whence $g_4$ fixes 32 
points on $\hat X_b$.

\noindent If $b\in B'$, for 8 choices of $(\overline{z_2}, \overline{z_3})$ there are two values
 of $z_1$ verifying the  equation of $\hat X_b$, while for the other 8 possibilities there is a unique 
 value of $z_1$ verifying the equation of $\hat X_b$,  whence $Fix(g_4)$
 is given by  16 smooth points and the 8 nodes of $\hat X_b$.

\item[\underline{$g_5$-$g_6$}:] these cases are analogous to $g_4$.
  
\item[\underline{$g_7$}:] 
 since $Fix(g_7)\cap Fix (g_0)= \emptyset$, the fixed locus of $g_7$ is independent from $b$.

The involution $g_7$  fixes the points $(z_1,\overline{z_2}, \overline{z_3})$  
with $2 \overline{z_2}=\frac12$, $2\overline{z_3}= \frac12$.
Let $(\mathcal{L}_i(z_i)_0:\mathcal{L}_i(z_i)_1) $ be the homogeneous coordinates of the point $\mathcal{L}_i(z_i)$. 
The equation of $\hat X_b$ is then 
$$\mathcal L_1(z_1)_0\mathcal L_2(z_2)_0\mathcal L_3(z_3)_0 = 
b_1b_2b_3 \mathcal L_1(z_1)_1\mathcal L_2(z_2)_1\mathcal L_3(z_3)_1\,.$$
It follows easily from the properties of the  Legendre $\mathcal L$-function that 
$ \left(\mathcal L_i\left(\frac{1}{4}\right)_0: \mathcal L_i\left(\frac{1}{4}\right)_1\right)=(0:1)\,, 
\left(\mathcal L_i\left(\frac{1}{4}+\frac{\tau_i}{2}\right)_0: \mathcal L_i\left(\frac{1}{4}+\frac{\tau_i}{2}\right)_1\right)=(1:0)\,.
$
If $z_2=\pm \frac14$ and $z_3=\pm \frac14+\frac{\tau_3}2$ or
 $z_2=\pm \frac14+\frac{\tau_2}2$ and $z_3=\pm \frac14$, then the equation is satisfied for 
any $z_1 \in E_1$, i.e $g_7$ fixes 8 disjoint elliptic curves.

\noindent If $z_2=\pm \frac14$ and $z_3=\pm \frac14$ then the equation becomes
$\mathcal{L}_1(z_1)_1=0$ that has two solutions.
If $z_2=\pm \frac14+\frac{\tau_2}2$ and $z_3=\pm \frac14+\frac{\tau_3}2$ 
 then the equation becomes $\mathcal{L}_1(z_1)_0=0$ that has two solutions, whence
 $g_7$ fixes 16 isolated points and 8 disjoint elliptic curves on $\hat X_b$.

\item[\underline{$g_8$-$g_9$}:] these cases are analogous to $g_7$.
\end{itemize}\end{proof}

We are now ready to prove our main result:
\begin{theorem}\label{mainthm}
Let $\epsilon \colon S\rightarrow \hat S= \hat X /G$ be a regular generalized Burniat type surface: 
 $(\hat{X},G) \in \{(\hat X_\nu,G_1),\, (\hat X_\mu,G_2),\,(\hat X_b,G_j), \,j=3,4\}$.
Then it verifies the Bloch conjecture.
\end{theorem}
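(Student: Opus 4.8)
The strategy is to apply Corollary \ref{corBl}: it suffices to exhibit, for each of the four types of regular GBT surface $S$, a subgroup $(\mathbb Z/2\mathbb Z)^2 \triangleleft \Aut(S)$ such that for each of the three non-trivial involutions $\sigma'$ in it, the quotient $S/\langle\sigma'\rangle$ has Kodaira dimension at most $1$; equivalently (by Proposition \ref{NotGT}), that none of the corresponding quotient surfaces is of general type. Since $S$ is the minimal resolution of $\hat S = \hat X/G$, any involution $\tau \in \mathcal G_0$ normalizing $G$ (for instance commuting with $G$, which is automatic as $\mathcal G_0$ is abelian) descends to an involution $\overline\tau$ on $\hat S$ and lifts to an involution $\sigma' = \epsilon^{-1}\circ\overline\tau\circ\epsilon$ on $S$. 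So the plan is: pick inside $\mathcal G_0$ (or inside $\mathcal G_1'$, $\mathcal G_1$ as appropriate) a complement $K \cong (\mathbb Z/2\mathbb Z)^2$ to $G$ modulo $G$, i.e. three cosets $gG$ with $g$ acting on $\hat X$, whose images generate a Klein four-group of automorphisms of $S$; then read off from Lemmas \ref{FixLoc1}, \ref{FixLoc2}, \ref{FixLoc3} the fixed loci of the relevant $\overline\tau$ on $\hat S$, and check that case (i), (ii) or (iii) of Proposition \ref{NotGT} applies to the induced involution on $S$.

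The main verification breaks into two parts for each chosen $\sigma'$. First, the \emph{hypotheses on $S$} in Proposition \ref{NotGT}: $S$ is minimal of general type with $K_S^2 = 6$, $p_g(S)=0$ (this is the defining property of a regular GBT surface), and $S$ contains no rational curves except at most one $(-2)$-curve $L$ — here $L$ exists exactly when $\hat X$ is singular (eight nodes, one node downstairs on $\hat S$, whose resolution introduces a single $(-2)$-curve), and is absent when $\hat X$ is smooth; one must also know that $S$ contains no other rational curves, which follows from the fact that $\hat X$ (resp. $S$) carries no rational curves other than those, a structural fact about Burniat hypersurfaces established in \cite{BCF14}. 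Second, the \emph{fixed-locus computation}: for $\sigma'$ on $S$ coming from $\overline\tau$ on $\hat S$, one has $Fix(\overline\tau) = \bigcup_{g\in G}\gamma(Fix_{\hat X}(\tau g))$, so the fixed locus of $\sigma'$ is assembled from the $G$-orbits of the fixed loci listed in the Lemmas; after dividing by $|G|=8$ one counts isolated points and identifies the fixed curves (whether they are the $(-2)$-curve $L$, an elliptic curve, or a higher-genus curve) on $S$. One then matches against the three allowed configurations: more than $8$ isolated points plus a non-rational curve; $6$ isolated points plus an elliptic curve disjoint from $L$; or $4$ isolated points plus an elliptic curve and $L$, with the elliptic curve disjoint from $L$.

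Concretely: for type (1), $(\hat X_\nu, G_1)$, the natural candidates are the cosets of the elements $g_1, g_2, g_7$ (or $g_3$) and suitable products; e.g. $g_7 = (0,0,0,0,0,1,0,0,1)$ has, on $\hat X_\nu$, fixed locus consisting of $16$ points and $8$ elliptic curves when $\hat X_\nu$ is smooth, and dividing by $G_1$ one expects (after checking the $G_1$-action on these) a fixed locus on $S$ of the shape required by case (i) or (ii); similarly $g_1$ and $g_2$ give four genus-$5$ curves upstairs. For types (2), (3), (4) one argues analogously using Lemmas \ref{FixLoc2} and \ref{FixLoc3} — note for $(\hat X_b, G_j)$ that the relevant one-dimensional-fixed-locus elements are $g_4,\dots,g_9$, and again $g_7,g_8,g_9$ contribute $8$ elliptic curves plus isolated points. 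In each of the twelve resulting quotients one concludes by Proposition \ref{NotGT} that the quotient is not of general type, hence by Corollary \ref{corBl} that $S$ satisfies Bloch's conjecture.

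\textbf{Main obstacle.} The delicate point is bookkeeping: one must (a) verify that the three chosen elements really descend to a Klein four-subgroup of $\Aut(S)$ (that the chosen $g$'s lie in the appropriate invariance group $\mathcal G_1'$, $\mathcal G_1$ or $\mathcal G_0$, that they are nontrivial modulo $G$, and that their pairwise products are again among the listed elements or act freely), and (b) for each of the three quotients in each of the four types, correctly pass from the $\hat X$-fixed-locus data in the Lemmas to the $S$-fixed-locus data — in particular tracking how the $G$-action permutes the fixed curves and points, how nodes of $\hat X$ interact with the fixed loci (nodes lying on a fixed curve force two fixed points on it and affect whether the quotient has extra rational curves), and whether the fixed elliptic curve meets the exceptional $(-2)$-curve $L$ — so that exactly one of the three numerical profiles (i), (ii), (iii) is met. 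The arithmetic of the Legendre function (the coordinate values at $\frac14$, $\frac{\tau}4$, etc.) already recorded in Section \ref{GBTS} and in the proofs of the Lemmas is what makes these counts explicit; the proof is essentially a careful case analysis organized around Table \ref{q0} and Tables \ref{EqFixLoc1}--\ref{EqFixLoc3}.
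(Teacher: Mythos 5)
Your strategy is exactly the paper's: reduce via Corollary \ref{corBl} to showing that for a suitable $(\mathbb Z/2\mathbb Z)^2 < \Aut(S)$ all three intermediate quotients fail to be of general type, and establish that failure with Proposition \ref{NotGT} by assembling the fixed locus of each induced involution on $S$ from the fixed loci (Lemmas \ref{FixLoc1}--\ref{FixLoc3}) of the elements in the corresponding coset of $G$. The reduction, the role of the three lemmas, and the two things to check (the ``no rational curves except $L$'' hypothesis and the numerical profiles (i)--(iii)) are all correctly identified.

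However, the proposal stops where the proof begins: no Klein four-group is actually fixed and verified for any of the four types, and the descent of the fixed loci from $\hat X$ to $S$ is left as ``one expects, after checking the $G$-action''. This is not bookkeeping that can be waved through. Two concrete points. First, the choice of cosets matters: for type 1 the paper takes $\sigma_1=(0,0,0,0,0,0,1,0,0)$ and $\sigma_2=(0,0,0,1,0,0,0,0,0)$, and for each of the three cosets $\sigma G_1$ one must list which elements $g_i$ lie in it (e.g.\ $\sigma_1G_1$ contains $g_1,g_6,g_7,g_{15}$) and use that $g_0\in G_1$ acts freely to see that their fixed loci are pairwise disjoint; an arbitrary choice of generators need not produce configurations covered by Proposition \ref{NotGT}. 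Second, your recipe ``divide by $|G|=8$'' is valid only for the isolated points (where the $G$-action is free), not for the curve part of the fixed locus: the four genus~$5$ curves fixed by $g_1$ form a single $G_1$-orbit in which each curve has stabilizer of order $2$ generated by $g_0$, which acts on it with quotient of genus $3$ by Hurwitz, so $\overline{\sigma_1}$ fixes one genus~$3$ curve on $\hat S$ --- the naive division would give ``half a curve''. Similarly, for $\sigma_3=\sigma_1+\sigma_2$ the stabilizer of each genus~$5$ curve has order $4$ and the quotient has genus $2$, and in the singular case one must track how the eight elliptic curves through the nodes descend and whether the lifted involution fixes the exceptional $(-2)$-curve pointwise or only in two points. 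These computations are the substance of the paper's proof; without them the claim that one of the profiles (i)--(iii) is met in each of the twelve quotients remains unverified.
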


We recall the following result, which allows to prove that each single surface in the moduli space corresponding to GBT surfaces of type $j$ ($1 \leq j \leq 4$) satisfies Boch's conjecture.
\begin{theorem}[{\cite{BCF14}}]\label{BCF}\quad
\begin{itemize}
\item[i)]Let $S$ be a smooth projective surface homotopically equivalent to a GBT surface  
$S_i$ of type $i$. Then $S$ is a GBT surface of type $i$, i.e. contained in the same irreducible family as $S_i$.
\item[ii)] The connected components $\mathfrak{N}_i$
of the Gieseker moduli space $\mathfrak{M}^{can}_{1,6}$ corresponding to GBT surfaces of type $i$
is irreducible, generically smooth, normal and unirational of dimension 4 ($i=1,2$) and
of dimension 3 else.
\end{itemize}
\end{theorem}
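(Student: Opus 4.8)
The plan is to prove the two parts in tandem: part (i) furnishes a topological rigidity statement, while part (ii) supplies the deformation‑theoretic input, and together they identify each $\mathfrak N_i$ with a connected component of the Gieseker moduli space. The guiding principle is that the isomorphism type of $\pi_1$ already detects the type $i$, and that a surface carrying the correct fundamental group and numerical invariants can be reconstructed as a GBT surface.

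For part (i) I would start from the fundamental group. Since $\hat X\subset T=E_1\times E_2\times E_3$ is an ample divisor, the Lefschetz hyperplane theorem gives $\pi_1(X')\cong\pi_1(T)\cong\mathbb Z^6$ (the nodes are rational double points, so the minimal resolution leaves $\pi_1$ unchanged). As $G\cong(\mathbb Z/2\mathbb Z)^3$ acts freely, $\pi_1(S_i)$ fits into an extension
\[
1 \to \mathbb Z^6 \to \pi_1(S_i) \to G \to 1,
\]
whose induced $G$-action on $\mathbb Z^6=H_1(T,\mathbb Z)$ is exactly the one recorded by the corresponding group in Table \ref{q0}. Because these four extensions are pairwise non‑isomorphic, the isomorphism class of $\pi_1$ determines $i$. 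Given a surface $S$ homotopy equivalent to $S_i$, one has $\pi_1(S)\cong\pi_1(S_i)$ and equal Chern numbers $K^2=c_2=6$ (both are oriented homotopy invariants); the characteristic subgroup $\mathbb Z^6$ then cuts out an étale $G$-cover $\tilde S\to S$ with $q(\tilde S)=3$, $K^2_{\tilde S}=48$, $\chi(\tilde S)=8$.

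The crux is to recover the geometry from $\tilde S$. Its Albanese variety is a three‑dimensional abelian variety carrying a $G$-action whose module structure (dictated by the type) forces a splitting $\Alb(\tilde S)\sim E_1\times E_2\times E_3$ into elliptic curves. I would then show that the Albanese map $\tilde S\to E_1\times E_2\times E_3$ is birational onto its image $\hat X$, a divisor whose class is pinned down by the numerical invariants to have multidegree $(2,2,2)$; hence $\hat X$ is a Burniat hypersurface, $S$ is its GBT quotient, and $S$ is of type $i$. I expect this reconstruction to be the main obstacle: one must control the Albanese image and its class and verify that the induced $G$-action is, up to the intrinsic symmetries of the construction, one of the four standard linearizations of Table \ref{q0} rather than merely abstractly isomorphic to it.

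For part (ii) I would exhibit each family explicitly. The parameters are the three elliptic curves $E_1,E_2,E_3$ (the moduli $\tau_1,\tau_2,\tau_3$) together with, for $i=1,2$, the extra parameter $\nu\in\mathbb P^1$ (resp.\ $\mu\in\mathbb C$); for $i=3,4$ the branch value $b=b_1b_2b_3$ is determined by the curves, so no further parameter occurs. This yields a rationally parametrized, hence unirational and irreducible, family of dimension $4$ for $i=1,2$ and $3$ for $i=3,4$. To see that $\mathfrak N_i$ is a whole connected component that is generically smooth and normal, I would compute the tangent space $H^1(S,\Theta_S)$ to the moduli space (via the $G$-eigenspace decomposition of $\Theta$ on the cover) and check that its dimension equals the number of parameters, so that the natural deformations exhaust the first‑order ones and the family is locally complete, i.e.\ its image in moduli is open. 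Closedness comes from part (i): fibres of a connected family are diffeomorphic, hence homotopy equivalent, so by (i) the type‑$i$ locus is also closed. Being open, closed and irreducible, it is a single connected component $\mathfrak N_i$; the vanishing of the relevant summand of the obstruction space $H^2(S,\Theta_S)$ then yields generic smoothness and normality.
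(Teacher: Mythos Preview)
This theorem is not proved in the present paper: it is quoted verbatim from \cite{BCF14} (note the attribution in the theorem header) and used as a black box to extend the Bloch conjecture result from the constructed GBT surfaces to every surface in their connected component. There is therefore no proof in this paper to compare your proposal against.

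That said, your outline follows the standard strategy for Inoue-type rigidity results (fundamental group determines the type; reconstruct the \'etale cover from its Albanese map; count parameters and match with $h^1(\Theta_S)$ to get openness; use the topological rigidity for closedness). This is indeed the shape of argument one expects in \cite{BCF14}, and your identification of the main difficulty---showing that the Albanese map of the cover is birational onto a $(2,2,2)$-divisor and that the $G$-action normalizes to one of the four standard ones---is accurate. One point to be careful about: for part~(ii) you invoke ``vanishing of the relevant summand of $H^2(S,\Theta_S)$'' to get generic smoothness and normality, but in practice such obstruction spaces often do not vanish outright; what one typically shows instead is that the Kuranishi family is smooth at the generic point by exhibiting enough unobstructed deformations, or by proving that the base of the versal family is reduced of the expected dimension. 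Your sketch glosses over this, and it is usually where the actual work in such moduli computations lies.
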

Together with Theorem \ref{mainthm} we thus obtain:

\begin{theorem}
Let $S$ be any surface such that its moduli point $[S]\in \mathfrak{N}_i$, $1 \leq i \leq 4$, then $S$ satisfies Bloch's conjecture.
\end{theorem}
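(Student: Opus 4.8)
The plan is to obtain the statement as a formal consequence of Theorem \ref{mainthm} together with Theorem \ref{BCF}. Bloch's conjecture has already been verified for \emph{every} regular GBT surface in Theorem \ref{mainthm}, and the four components $\mathfrak{N}_i$, $1\leq i\leq 4$, are by construction the components of the Gieseker moduli space $\mathfrak{M}^{can}_{1,6}$ containing the (regular) GBT surfaces of type $i$; so everything reduces to showing that a surface whose moduli point lies in $\mathfrak{N}_i$ is again a GBT surface of type $i$.

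First I would fix a regular GBT surface $S_i$ of type $i$ (for instance one of the surfaces produced by Theorem \ref{clthm}), so that $[S_i]\in\mathfrak{N}_i$, and let $S$ be an arbitrary surface with $[S]\in\mathfrak{N}_i$. Since $S$ and $S_i$ are smooth minimal surfaces of general type with $K^2=6$ and $\chi=1$ whose canonical models lie in the \emph{same} connected component of the finite-type moduli space $\mathfrak{M}^{can}_{1,6}$, they are deformation equivalent: one joins $[S]$ to $[S_i]$ by a path inside $\mathfrak{N}_i$ and pulls back the universal family. By Ehresmann's fibration theorem, the fibres of a connected family of surfaces of general type are diffeomorphic, so $S$ and $S_i$ are diffeomorphic, in particular homotopically equivalent.

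Then I would apply Theorem \ref{BCF}(i): being homotopically equivalent to the GBT surface $S_i$ of type $i$, the surface $S$ is itself a GBT surface of type $i$; in particular $S$ is regular. Hence Theorem \ref{mainthm} applies to $S$ and yields $T(S)=A_0^0(S)=0$, i.e.\ $S$ satisfies Bloch's conjecture.

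The only step that is not entirely formal is the implication ``$[S]\in\mathfrak{N}_i$ $\Rightarrow$ $S$ homotopically equivalent to $S_i$'', which rests on the standard fact that the fibres of a connected family of surfaces of general type are diffeomorphic; I do not anticipate any genuine difficulty here, as the whole mathematical content of the statement is carried by Theorems \ref{mainthm} and \ref{BCF}.
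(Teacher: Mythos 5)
Your proposal is correct and follows the same route as the paper: the paper obtains this theorem as an immediate combination of Theorem \ref{BCF} (every surface in $\mathfrak{N}_i$ is a GBT surface of type $i$) with Theorem \ref{mainthm} (Bloch's conjecture for all regular GBT surfaces). The extra detail you supply — passing from membership in the connected component $\mathfrak{N}_i$ to deformation equivalence, hence to homotopy equivalence, so that Theorem \ref{BCF}(i) applies — is exactly the standard bridge the paper leaves implicit.
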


\begin{proof}[Proof of Theorem \ref{mainthm}]
We prove the statement case by case. In each case we consider a group 
$H\cong (\mathbb Z/2\mathbb Z)^2 < \Aut(\hat X)$  which 
allow us define a group $H'\cong (\mathbb Z/2\mathbb Z)^2 < \Aut(S)$
satisfying the assumptions of Corollary \ref{corBl}.

$G_1$)   Let us consider the involutions
$\sigma_1:=(0,0,0,0,0,0,1,0,0)$ and $\sigma_2:=(0,0,0,1,0,0,0,0,0)$ in $\mathcal G'_1$.

In the coset $\sigma_1G_1$ there are four elements fixing points on $\hat X_\nu$:
$$
\begin{array}{ccc}
g_1=(0,0,0,0,0,0,1,0,0)&\qquad & g_6=(1,0,0,1,0,0,0,0,0)\\
g_7=(0,0,0,0,0,1,0,0,1) &\qquad & g_{15}=(0,1,0,1,1,1,1,1,1)  
\end{array}
$$

\noindent Since $g_0:=(1,0,0,1,0,0,1,0,0)\in G_1$ has no fixed points on $\hat X_\nu$, 
the 4 sets $Fix_{\hat X_\nu}(g_k)$ ($k\in\{1,6,7,15\}$) are pairwise disjoint. 

\noindent $g_1$ fixes 4 curves of genus 5 on $\hat X_\nu$, $G_1$ acts transitively 
on this set of curves and 
 $g_0:=(1,0,0,1,0,0,1,0,0)\in G_1$ maps  each curve onto itself,
hence a genus 3 curve is fixed  by $\overline{\sigma_1}$  on $\hat S$.

\noindent $g_6$ and $g_{15}$ fix 32 points each on $\hat X_\nu$ and $G_1$ 
acts freely on these two sets:
we get  8 points fixed by $\overline{\sigma_1}$.

\noindent $g_7$ fixes 8 disjoint elliptic curves and 16 points if $\hat X_\nu$ is smooth, 
8 nodes otherwise.
Since $G_1$ acts freely on the set of points and transitively on the set of  curves, we get that
$\overline{\sigma_1}$ fixes one elliptic curve and either 2 points or 1 node.

\noindent It follows that the involution $\overline{\sigma_1} $ on $\hat S$ lifts
 to an involution $\sigma'_1$ on $S$ 
 whose fixed locus contains  a genus 3 curve, an elliptic curve and  8 isolated smooth points,
by Proposition \ref{NotGT}, $S/\sigma'_1$ is not of general type.

\

In the coset $\sigma_2G_1$ there are three elements fixing points on $\hat X_\nu$:
 $$
\begin{array}{ccc}
g_2=(0,0,0,1,0,0,0,0,0)&\qquad & g_5=(1,0,0,0,0,0,1,0,0)\\
 g_{11}=(1,0,0,0,0,1,0,0,1)  &&
\end{array}
$$
 \noindent The 3 sets $Fix_{\hat X_\nu}(g_k)$ ($k\in\{2,5,11\}$) are pairwise disjoint, 
 since $g_0$ has no fixed points on  $\hat X_\nu$.
 
\noindent $g_2$ fixes 4 curves of genus 5 on $\hat X_\nu$, $G_1$ acts transitively 
on this set of curves and 
 $g_0:=(1,0,0,1,0,0,1,0,0)\in G_1$ maps  each curve onto itself,
hence a genus 3 curve is fixed  by $\overline{\sigma_2}$  on $\hat S$. 
  
\noindent $g_5$  fixes 32 points on $\hat X_\nu$: we get  4 points fixed by $\overline{\sigma_2}$. 
 
\noindent If $\hat X_\nu$ is smooth $g_{11}$ fixes 32 isolated points, else 
 it fixes 32 smooth isolated points and  8 nodes: we get that 
$\overline{\sigma_2}$ fixes 4 smooth points and,  if $\hat S$ is singular, a node too.

\noindent It follows that the involution $\overline{\sigma_2} $ on $\hat S$ lifts
 to an involution $\sigma'_2$ on $S$ 
 whose fixed locus contains  a genus 3 curve, an elliptic curve and  8 isolated smooth points,
by Proposition \ref{NotGT}, $S/\sigma'_2$ is not of general type.

\

 Let $\sigma_3:= \sigma_1+\sigma_2$. In the coset $\sigma_3G_1$ there are
  three elements fixing points on $\hat X_\nu$:
  $$
\begin{array}{ccc}
g_3=(1,0,0,0,0,0,0,0,0)&\qquad & g_4=(0,0,0,1,0,0,1,0,0)\\
 g_{14}=(0,1,0,0,1,0,0,1,0)  &&
\end{array}
$$

 \noindent The 3 sets $Fix_{\hat X_\nu}(g_k)$ ($k\in\{3,4,14\}$) are pairwise disjoint, 
 since $g_0$ has no fixed points on  $\hat X_\nu$.
  
  \noindent $g_4$ and $g_{14}$ fix 32 points each on $\hat X_\nu$ and $G_1$ 
acts freely on these two sets: we get  8 points fixed by $\overline{\sigma_3}$.

 \noindent If $\hat X_\nu$ is smooth, $g_3$ fixes  four disjoint smooth curves of genus 5 
 on $\hat X_\nu$.
   Let $H:=\langle (1,0,0,1,0,0,1,0,0), (0,0,0,0,0,1,1,0,1) \rangle \triangleleft G_1$, 
   each curve is invariant under $H$, hence $\overline{\sigma_3}$ fixes two disjoint genus 2 curves.
 
\noindent If $\hat X_\nu$ is singular, $g_3$ fixes two disjoint smooth curves of genus 5 and
 8 elliptic curves such that a point belongs to two of them if and only if it is a node.
 Looking at the $G_1$ action on this configuration of curves, one can easily prove  that
 $\overline{\sigma_3}$ fixes on $\hat S$  two elliptic curves intersecting in a node and a genus 2 curve.

\noindent It follows that the involution $\overline{\sigma_3} $ on $\hat S$ lifts
 to an involution $\sigma'_3$ on $S$ 
 whose fixed locus contains  a genus 2 curve and  8 isolated smooth points,
by Proposition \ref{NotGT}, $S/\sigma'_3$ is not of general type.

 Applying Corollary \ref{corBl},  with $(\mathbb Z/2\mathbb Z)^2=
\langle \sigma_1, \sigma_2\rangle$,   we conclude that $S$ verifies Bloch's conjecture.


\

$G_2$) Let us consider the involutions  $\sigma_4:=(1,0,0,0,0,0,0,0,0)$
and $\sigma_5:=(0,0,0,1,0,0,0,0,0)$ in $\mathcal{G}_1$.

In the coset $\sigma_4G_2$, there are four elements fixing points on $\hat X_\mu$:
  $$
\begin{array}{ccc}
g_3=(1,0,0,0,0,0,0,0,0)&\qquad & g_4=(0,0,0,1,0,0,1,0,0)\\
g_9=(0,0,1,0,0,1,0,0,0)&\qquad & g_{12}=(0,0,1,1,0,0,0,0,1)  
\end{array}
$$
Since $g_0:=(1,0,0,1,0,0,1,0,0)\in G_2$ has no fixed points,
 the 4 sets $Fix_{\hat X_\mu}(l_j)$ ($j\in\{3,4,9,12\}$) are pairwise disjoint.

\noindent $g_3$ fixes four genus 5 curves that are
invariant under the subgroup $\langle (0,0,0,1,0,1,0,0,1), (1,0,0,1,0,0,1,0,0) \rangle \triangleleft G_2$,
hence $\sigma'_4$ fixes two disjoint genus 2 curves.

\noindent $g_4$ and $g_{12}$ fix 32 points each on $\hat X_\nu$ and $G_2$ 
acts freely on these two sets: we get  8 points fixed by $\overline{\sigma_4}$. 

\noindent  $g_9$ fixes 16 isolated points  and 8 disjoint elliptic curves 
on $\hat X_\mu$, each curve is invariant under $(1,0,1,0,0,1,0,0,0)\in G_2$.
We get that $\sigma'_4$ fixes 2 isolated points and two disjoint elliptic curves.

It follows that $Fix({\sigma'_4})$ is given by 10 isolated fixed points, 
two genus 2 curves and two elliptic curves. 
By Proposition \ref{NotGT}, the quotient $S/{\sigma'_4}$ is not of general type.

The same argument shows that 
 $S/\sigma'_4$ and  $S/(\sigma_4+\sigma_5)'$ are not of general type, 
 whence $S$ verifies Bloch's conjecture, thanks to Corollary \ref{corBl}.


\

$G_3$)  Let us consider the involutions
$\sigma_6:=(1,0,0,1,0,0,0,0,0)$ and $\sigma_7:=(1,0,0,0,0,0,1,0,0)$.
In the coset $\sigma_6G_3$, there are two elements fixing points on $\hat X$:
  $$
g_6=(1,0,0,1,0,0,0,0,0)\qquad  g_8=(0,0,1,0,0,0,0,0,1)
$$
The 2 sets $Fix_{\hat X_b}(h_j)$ ($j\in\{6,8\}$) are  disjoint.

\noindent $g_6$ fixes 32 points on $\hat X_b$ if it is smooth, 16 smooth points and 8 nodes
otherwise. Since $G_3$ acts freely on this set of points
we get that  $\overline{\sigma_6}$ fixes either 4  points ($\hat X_b$  smooth) 
or 2 smooth points and the node.

\noindent $g_8$ fixes 16 isolated smooth points and 8 elliptic curves.

\noindent 
In the smooth case $Fix({\sigma'_6})$ 
is given by  6 isolated fixed points and  an elliptic curve. By Proposition \ref{NotGT}, the quotient $S/\sigma'_6$ is not of general type.

 \noindent In the singular case,  $\overline{\sigma_6}$ fixes the node $p$, 
 4 smooth points and an elliptic curve.
  Let $\Gamma:=\epsilon^{-1}(p)$, the involution
$\overline{\sigma_6} $ lifts to an involution $\sigma'_6$ on $S$ such that
 $\sigma'_6(\Gamma)=\Gamma\cong \mathbb P^1$ and
it fixes an elliptic curve,  4 smooth points and either two isolated points on $\Gamma$ or  $\Gamma$: $(-2)$-curve.
In both cases, by Proposition \ref{NotGT}, $S/\sigma'_6$ is not of general type.


\noindent The same argument shows that 
 $S/\sigma'_7$ and  $S/(\sigma_6+\sigma_7)'$ are not of general type,
  whence $S$ verifies Bloch's conjecture, thanks to Corollary \ref{corBl}.

\

$G_4$) Considering involutions $(1,0,0,1,0,0,0,0,0)$ and  $(1,0,0,0,0,0,1,0,0)$,
 this case is analogous to the $G_3$-case.

\end{proof}

\bibliographystyle{alpha}

\end{document}